\newcommand{\ie}{i.e.}
\newcommand{\nsf}{n.s.f.}
\newcommand{\GNS}{G.N.S.}
\newcommand{\G}{\mathbb{G}}
\newcommand{\Hq}{\mathbb{H}}
\newcommand{\irr}{\text{Irred}(\G)}
\newcommand{\irrH}{\text{Irred}(\mathbb{H})}
\newcommand{\irrHi}{\text{Irred}(\mathbb{H}_{i})}
\newcommand{\bh}{\mathcal{B}(H)}
\newcommand{\bhx}{\mathcal{B}(H_{x})}
\newcommand{\bk}{\mathcal{B}(K)}
\newcommand{\ot}{\otimes}
\newcommand{\id}{\text{id}}
\newcommand{\Z}{\mathbb{Z}}
\newcommand{\R}{\mathbb{R}}
\newcommand{\N}{\mathbb{N}}
\newcommand{\C}{\mathbb{C}}
\newcommand{\V}{\mathbb{V}}
\newcommand{\Gh}{\widehat{\mathbb{G}}}
\newcommand{\Hh}{\widehat{\mathbb{H}}}
\newcommand{\dimq}{\text{dim}_{\text{q}}(x)}
\newcommand{\coGh}{c_{0}(\Gh)}
\newcommand{\coHh}{c_{0}(\Hh)}
\newcommand{\linfG}{l^{\infty}(\Gh)}
\newcommand{\linfH}{l^{\infty}(\Hh)}
\newcommand{\linfGmodH}{l^{\infty}(\Gh/\Hh)}
\newcommand{\ldeGmodH}{l^{2}(\Gh/\Hh)}
\theoremstyle{plain}
\newtheorem{theorem}{Theorem}
\newtheorem{lemma}{Lemma}
\newtheorem{proposition}{Proposition}
\newtheorem{corollary}{Corollary}
\theoremstyle{definition}
\newtheorem{definition}{Definition}
\newtheorem{example}{Example}
\newtheorem{notation}{Notation}
\newtheorem{remark}{Remark}
\begin{document}

\title{\normalsize{\textbf{KAZHDAN'S PROPERTY $T$ FOR DISCRETE QUANTUM GROUPS}}}
\author{\normalsize{\textsc{Pierre Fima}}\footnote{Department of Mathematics, University of Illinois at Urbana-Champaign, Urbana, Illinois 61801, United States. Email: pfima@illinois.edu}\,\,\footnote{This research was supported by the ANR (NCLp).}}
\date{}
\maketitle

\begin{abstract}
We give a simple definition of property $T$ for discrete quantum groups. We prove the basic expected properties: discrete quantum groups with property $T$ are finitely generated and unimodular. Moreover we show that, for ``I.C.C.'' discrete quantum groups, property $T$ is equivalent to Connes' property $T$ for the dual von Neumann algebra. This allows us to give the first example of a property $T$ discrete quantum group which is not a group using the twisting construction.
\end{abstract}

\section{Introduction}

In the 1980's, Woronowicz \cite{Wor1}, \cite{Wor2}, \cite{Wor3} introduced the notion of a compact quantum group and generalized the classical Peter-Weyl representation theory. Many interesting examples of compact quantum groups are available by now: Drinfel'd and Jimbo \cite{Drinfeld}, \cite{Jimbo} introduced q-deformations of compact semi-simple Lie groups, and Rosso \cite{Rosso} showed that they fit into the theory of Woronowicz. Free orthogonal and unitary quantum groups were introduced by Van Daele and Wang \cite{Wang} and studied in detail by Banica \cite{Banica1}, \cite{Banica2}.

Some discrete group-like properties and proofs have been generalized to (the dual of) compact quantum groups. See, for example, the work of Tomatsu \cite{Tom} on amenability, the work of Banica and Vergnioux \cite{BanVerg} on growth and the work of Vergnioux and Vaes \cite{VaesVerg} on boundary.

The aim of this paper is to define property $T$ for discrete quantum groups. We give a definition analogous to the group case using almost invariant vectors. We show that a discrete quantum group with property T is finitely generated, \ie{} the dual is a compact quantum group of matrices. Recall that a locally compact group with property $T$ is unimodular. We show that the same result holds for discrete quantum groups, \ie{} every discrete quantum group with property T is a Kac algebra. In \cite{CJ} Connes and Jones defined property $T$ for arbitrary von Neumann algebras and showed that an I.C.C. group has property $T$ if and only if its group von Neumann algebra (which is a $\rm{II}_{1}$ factor) has property $T$. We show that if the group von Neumann algebra of a discrete quantum group $\Gh$ is an infinite dimensional factor (\ie{} $\Gh$ is ``I.C.C.''), then $\widehat{\G}$ has property $T$ if and only if its group von Neumann algebra is a $\rm{II}_{1}$ factor with property $T$. This allows us to construct an example of a discrete quantum group with property $T$ which is not a group by twisting an I.C.C. property $T$ group. In addition we show that free quantum groups do not have property $T$.

This paper is organized as follows: in Section 2 we recall the notions of compact and discrete quantum groups and the main results of this theory. We introduce the notion of discrete quantum sub-groups and prove some basic properties of the quasi-regular representation. We also recall the definition of property $T$ for von Neumann algebras. In Section 3 we introduce property $T$ for discrete quantum groups, we give some basic properties and we show our main result.

\section{Preliminaries}

\subsection{Notations}
The scalar product of a Hilbert space $H$, which is denoted by $\langle .,.\rangle$, is supposed to be linear in the first variable. The von Neumann algebra of bounded operators on $H$ will by denoted by $\bh$ and the $C^{*}$ algebra of compact operators by $\mathcal{B}_{0}(H)$. We will use the same symbol $\ot$ to denote the tensor product of Hilbert spaces, the minimal tensor product of $C^{*}$ algebras and the spatial tensor product of von Neumann algebras. We will use freely the leg numbering notation.

\subsection{Compact quantum groups}

We briefly overview the theory of compact quantum groups developed by Worono\-wicz in \cite{Wor3}. We refer to the survey paper \cite{VD} for a smooth approach to these results.

\begin{definition}
A \textit{compact quantum group is a pair} $\G=(A,\Delta)$, where $A$ is a unital $C^{*}$ algebra; $\Delta$ is unital *-homomorphism from $A$ to $A\ot A$ satisfying $(\Delta\ot\id)\Delta=(\id\ot\Delta)\Delta$ and $\Delta(A)(A\ot 1)$ and $\Delta(A)(1\ot A)$ are dense in $A\ot A$.
\end{definition}
\begin{notation}
We denote by $C(\G)$ the $C^{*}$ algebra $A$.
\end{notation}

The major results in the general theory of compact quantum groups are the existence and uniqueness of the Haar state and the Peter-Weyl representation theory.
\begin{theorem}
Let $\G$ be a compact quantum group. There exists a unique state $\varphi$ on $C(\G)$ such that $(\id\ot\varphi)\Delta(a)=\varphi(a)1=(\varphi\ot\id)\Delta(a)$ for all $a\in C(\G)$. The state $\varphi$ is called the Haar state of $\G$.
\end{theorem}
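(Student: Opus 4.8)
The plan is to dispose of uniqueness by an elementary convolution argument, and to obtain the Haar state by averaging over the semigroup of states of $A := C(\G)$, the delicate point being to promote ``invariance with respect to one state'' to genuine bi-invariance, which is where the cancellation axioms enter.

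\textbf{Uniqueness.} If $\varphi$ and $\psi$ both satisfy the stated invariance, then for every $a \in A$
\[
(\varphi \otimes \psi)\Delta(a) \;=\; \psi\big((\varphi \otimes \id)\Delta(a)\big) \;=\; \varphi(a)
\qquad\text{and}\qquad
(\varphi \otimes \psi)\Delta(a) \;=\; \varphi\big((\id \otimes \psi)\Delta(a)\big) \;=\; \psi(a),
\]
so $\varphi = \psi$; in fact this already shows that a left-invariant state and a right-invariant state must coincide.

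\textbf{Existence.} Equip the state space $S(A)$ with the convolution $\mu \star \nu := (\mu \otimes \nu)\Delta$. Coassociativity makes $(S(A),\star)$ a semigroup, $S(A)$ is convex and weak$^*$-compact, and $\star$ is separately weak$^*$-continuous. Given any state $f$ on $A$, let $h$ be a weak$^*$-cluster point of the Cesàro means $f_n := \tfrac1n \sum_{k=1}^n f^{\star k}$. Since $f \star f_n - f_n = \tfrac1n\big(f^{\star(n+1)} - f\big) \to 0$ in norm, one obtains $f \star h = h \star f = h$; then $f^{\star k}\star h = h$ for all $k$, whence $f_n\star h = h$ and, in the limit, $h$ is idempotent, $h \star h = h$.

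It remains to show that such an $h$ is invariant under \emph{every} state, equivalently that the linear map $E := (\id \otimes h)\Delta \colon A \to A$ (which is idempotent, since $\Delta E = (\id\otimes E)\Delta$ by coassociativity and $h\circ E = h$) has range $\C 1$, and symmetrically for $(h\otimes\id)\Delta$. One checks $E(1) = 1$, and since $h\circ E = h$ and $f\circ E = h$ the functionals $f$ and $h$ agree on $E(A)$; taking $f$ faithful (a harmless reduction to the separable case, cf.\ \cite{VD}) makes $h$ faithful on $E(A)$. The final and genuinely difficult step is to conclude that $E(a) - h(a)1 = 0$ for all $a$: the averaging machinery alone only yields absorption of the single state $f$, and it is precisely here that the cancellation conditions $\overline{\Delta(A)(A\otimes1)} = \overline{\Delta(A)(1\otimes A)} = A\otimes A$ must be exploited (as in Woronowicz's original norm estimate, or the argument of \cite{VD}). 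I expect this bootstrap to be the main obstacle; granting it, $h$ is bi-invariant, hence the Haar state, and uniqueness has already been shown.
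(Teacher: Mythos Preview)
The paper does not prove this theorem: it is quoted as background, with the reader referred to \cite{Wor3} and the survey \cite{VD}. So there is no ``paper's own proof'' to compare against. Your outline is in fact the standard strategy of \cite{VD} (following Van Daele): obtain an idempotent state $h$ by Ces\`aro-averaging convolution powers of a fixed state $f$, then promote $h\star h=h$ to full bi-invariance using the cancellation axioms.

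Two remarks on the sketch itself. First, the reduction to a faithful $f$ is neither necessary nor innocuous: a non-separable unital $C^{*}$-algebra need not carry a faithful state, and \cite{VD} makes no such reduction. Second, and more to the point, even granting a faithful $f$, the inference ``$f=h$ on $E(A)$, hence $h$ is faithful on $E(A)$'' does not deliver what you need. The range $E(A)$ of the UCP idempotent $E=(\id\otimes h)\Delta$ is only an operator system, not a $C^{*}$-subalgebra; for $c=E(a)-h(a)1\in E(A)$ the product $c^{*}c$ need not lie in $E(A)$, so equality of $f$ and $h$ on $E(A)$ gives no information about $h(c^{*}c)$ or $f(c^{*}c)$, and faithfulness of $f$ on $A$ cannot be invoked to kill $c$. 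The genuine argument in \cite{VD} and \cite{Wor3} bypasses this and uses the density of $\Delta(A)(1\otimes A)$ in $A\otimes A$ directly, working in the GNS representation of $h$. You have correctly located the crux and you are honest that it is left open; as it stands the proposal is a road map with the one hard lemma black-boxed, not a proof.
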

\begin{notation}
The Haar state need not be faithful. We denote by $\G_{\text{red}}$ the \textit{reduced quantum group} obtained by taking $C(\G_{\text{red}})=C(\G) / I$ where $I=\{x\in A\,|\,\varphi(x^{*}x)=0\}$. The Haar measure is faithful on $\G_{\text{red}}$. We denote by $L^{\infty}(\G)$ the von Neumann algebra generated by the G.N.S. representation of the Haar state of $\G$. Note that $L^{\infty}(\G_{\text{red}})=L^{\infty}(\G)$.
\end{notation}

\begin{definition}
A \textit{unitary representation} $u$ of a compact quantum group $\G$ on a Hilbert space $H$ is a unitary element $u\in$M$(\mathcal{B}_{0}(H)\ot C(\G))$ satisfying
$$(\id\ot\Delta)(u)=u_{12}u_{13}.$$
Let $u^{1}$ and $u^{2}$ be two unitary representations of $\G$ on  the respective Hilbert spaces $H_{1}$ and $H_{2}$. We define the set of \textit{intertwiners}
$$\text{Mor}(u^{1},u^{2})=\{T\in\mathcal{B}(H_{1},H_{2})\,|\, (T\ot 1)u^{1}=u^{2}(T\ot 1)\}.$$
A unitary representation $u$ is said to be \textit{irreducible} if $\text{Mor}(u,u)=\C 1$. Two unitary representations $u^{1}$ and $u^{2}$ are said to be \textit{unitarily equivalent} if there is a unitary element in $\text{Mor}(u^{1}, u^{2})$.
\end{definition}

\begin{theorem}
Every irreducible representation is finite-dimensional. Every unitary representation is unitarily equivalent to a direct sum of irreducibles.
\end{theorem}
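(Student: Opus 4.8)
The statement is the quantum Peter--Weyl theorem; the plan is to follow Woronowicz's strategy, namely to manufacture from the Haar state, for every unitary representation, a plentiful supply of nonzero \emph{compact} self-intertwiners, and then to feed these into the spectral theory of compact operators. So let $u$ be a unitary representation of $\G$ on $H$. For a unit vector $\xi\in H$ put $p_{\xi}=|\xi\rangle\langle\xi|\in\mathcal{B}_{0}(H)$ and
$T_{\xi}:=(\id\ot\varphi)\bigl(u(p_{\xi}\ot 1)u^{*}\bigr)\in\bh$,
which is legitimate because $\id\ot\varphi$ extends to a (strictly continuous) slice on $\mathrm{M}(\mathcal{B}_{0}(H)\ot C(\G))$ and, by the ideal property of $\mathcal{B}_{0}(H)\ot C(\G)$ in its multiplier algebra, $u(p_{\xi}\ot 1)u^{*}$ actually lies in $\mathcal{B}_{0}(H)\ot C(\G)$; hence $T_{\xi}$ is compact, and it is positive since $\varphi$ is a state. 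The crucial point is that $T_{\xi}\in\text{Mor}(u,u)$: using $(\id\ot\de)(u)=u_{12}u_{13}$ together with the bi-invariance $(\varphi\ot\id)\de=\varphi(\cdot)1=(\id\ot\varphi)\de$ of the Haar state, a leg-numbering computation yields $(T_{\xi}\ot 1)u=u(T_{\xi}\ot 1)$ (this is the quantum incarnation of the classical change of variables $g\mapsto kg$ which shows that $\int_{G}u(g)pu(g)^{*}\,dg$ is central). Finally, at least one $T_{\xi}$ is nonzero: summing over an orthonormal basis (net) $(\xi_{i})$ of $H$ one has $\sum_{i}p_{\xi_{i}}=1$, hence $\sum_{i}u(p_{\xi_{i}}\ot 1)u^{*}=uu^{*}=1$, and therefore $\sum_{i}T_{\xi_{i}}=(\id\ot\varphi)(1)=1$, so the positive operators $T_{\xi_{i}}$ cannot all vanish.

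With this in hand the theorem follows quickly. Fix $\xi$ with $T:=T_{\xi}\ne 0$. Being a nonzero positive compact operator, $T$ has a nonzero eigenvalue $\lambda$, which is an isolated point of its spectrum, with finite-dimensional eigenspace $V$. Now $\text{Mor}(u,u)$ is a norm-closed $*$-subalgebra of $\bh$ (it is $*$-closed precisely because $u$ is unitary, and closed under products), hence a $\cs$-algebra stable under continuous functional calculus; since $\lambda$ is isolated in $\text{sp}(T)$, the spectral projection $P_{V}$ of $T$ onto $V$ lies in $\text{Mor}(u,u)$, so $(P_{V}\ot 1)u(P_{V}\ot 1)$ is a finite-dimensional subrepresentation of $u$. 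In particular, if $u$ is irreducible then $\text{Mor}(u,u)=\C 1$ has no projections other than $0$ and $1$, forcing $P_{V}=1$, so $H=V$ is finite-dimensional.

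For complete reducibility, first note that a finite-dimensional unitary representation $v$ splits as a finite orthogonal direct sum of irreducibles, obtained by writing $1$ as a sum of minimal projections of the finite-dimensional $\cs$-algebra $\text{Mor}(v,v)$ (a minimal projection $q$ gives an irreducible subrepresentation because $q\,\text{Mor}(v,v)\,q=\C q$). For a general unitary representation $u$ on $H$, choose by Zorn's lemma a maximal family $\{V_{\alpha}\}$ of mutually orthogonal finite-dimensional subrepresentations of $u$. If $W:=(\bigoplus_{\alpha}V_{\alpha})^{\perp}$ were nonzero, its projection would again lie in $\text{Mor}(u,u)$, so $u|_{W}$ would be a nonzero unitary representation and hence, by the previous paragraph, would contain a nonzero finite-dimensional subrepresentation, contradicting maximality. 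Therefore $H=\bigoplus_{\alpha}V_{\alpha}$, and refining each $V_{\alpha}$ into irreducibles completes the argument.

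The main obstacle is the middle step of the first paragraph: rigorously carrying out the slice-map and leg-numbering bookkeeping to verify that the Haar-averaged operator $T_{\xi}$ is a genuine intertwiner, and along the way justifying that $u(p_{\xi}\ot 1)u^{*}$ really lands in $\mathcal{B}_{0}(H)\ot C(\G)$ and that $\id\ot\varphi$ behaves as expected on multipliers. Once that is secured, the remaining ingredients — spectral theory of compact operators, basic $\cs$-algebra structure in finite dimensions, and a routine maximality argument — are standard.
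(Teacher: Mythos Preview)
The paper does not actually prove this theorem: it is stated in the preliminaries section as part of the ``brief overview'' of Woronowicz's theory, with the reader referred to \cite{Wor3} and the survey \cite{VD} for proofs. So there is no in-paper argument to compare against.

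That said, your sketch is correct and is precisely the Woronowicz strategy carried out in those references. The intertwiner claim you flag as the ``main obstacle'' does go through: applying $\id\ot\id\ot\varphi$ to the identity
\[
(\id\ot\Delta)\bigl(u(p_{\xi}\ot 1)u^{*}\bigr)=u_{12}u_{13}(p_{\xi}\ot 1\ot 1)u_{13}^{*}u_{12}^{*},
\]
the left-hand side becomes $T_{\xi}\ot 1$ by right invariance of $\varphi$, while on the right-hand side $u_{12}$ can be pulled past the slice on the third leg, leaving $u(T_{\xi}\ot 1)u^{*}$; this gives exactly $(T_{\xi}\ot 1)u=u(T_{\xi}\ot 1)$. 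The compactness of $T_{\xi}$ is justified as you say (ideal property of $\mathcal{B}_{0}(H)\ot C(\G)$ in its multiplier algebra), and the remaining spectral/Zorn argument is standard. One small remark: the spectral projection $P_{V}$ at the isolated eigenvalue $\lambda$ is obtained by \emph{continuous} functional calculus in the unital $\cs$-algebra $\text{Mor}(u,u)$, which is exactly what you invoke; no Borel calculus or von Neumann algebra closure is needed.
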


\begin{definition}
Let $u^{1}$ and $u^{2}$ be unitary representations of $\G$ on the respective Hilbert spaces $H_{1}$ and $H_{2}$. We define the tensor product
$$u^{1}\ot u^{2}=u^{1}_{13}u^{2}_{23}\in\text{M}(\mathcal{B}_{0}(H_{1}\ot H_{2})\ot C(\G)).$$
\end{definition}
\begin{notation}
We denote by $\irr$ the set of (equivalence classes) of irreducible unitary representations of a compact quantum group $\G$. For every $x\in\irr$ we choose representatives $u^{x}$ on the Hilbert space $H_{x}$. Whenever $x,y\in\irr$, we use $x\ot y$ to denote the (class of the) unitary representation $u^{x}\ot u^{y}$. The class of the trivial representation is denoted by $1$.
\end{notation}

The set $\irr$ is equipped with a natural involution $x\mapsto\bar{x}$ such that $u^{\bar{x}}$ is the unique (up to unitary equivalence) irreducible representation such that
$$\text{Mor}(1,x\ot\bar{x})\neq 0\neq \text{Mor}(1,\bar{x}\ot x).$$
This means that $x\ot\bar{x}$ and $\bar{x}\ot x$ contain a non-zero invariant vector. Let $E_{x}\in H_{x}\ot H_{\bar{x}}$ be a non-zero invariant vector and $J_{x}$ the invertible antilinear map from $H_{x}$ to $H_{\bar{x}}$ defined by
$$\langle J_{x}\xi,\eta\rangle=\langle E_{x}, \xi\ot \eta\rangle,\quad\text{for all}\,\,\,\xi\in H_{x},\,\,\eta\in H_{\bar{x}}.$$
Let $Q_{x}=J_{x}^{*} J_{x}$. We will always choose $E_{x}$ and $E_{\bar{x}}$ normalized such that $||E_{x}||=||E_{\bar{x}}||$ and $J_{\bar{x}}=J_{x}^{-1}$. Then $Q_{x}$ is uniquely determined, $\text{Tr}(Q_{x})=||E_{x}||^{2}=\text{Tr}(Q_{x}^{-1})$ and $Q_{\bar{x}}=(J_{x}J_{x}^{*})^{-1}$. $\text{Tr}(Q_{x})$ is called the \textit{quantum dimension} of $x$ and is denoted by $\dimq$. The unitary representation $u^{\bar{x}}$ is called the \textit{contragredient} of $u^{x}$.

The G.N.S. representation of the Haar state is given by $(L^{2}(\G),\Omega)$ where $L^{2}(\G)=\bigoplus_{x\in\irr}H_{x}\ot H_{\bar{x}}$, $\Omega\in H_{1}\ot H_{\bar{1}}$ is the unique norm one vector, and
$$(\omega_{\xi,\eta}\ot\id)(u^{x})\Omega=\frac{1}{||E_{x}||}\xi\ot J_{x}(\eta),\quad\text{for all}\,\,\,\xi,\eta\in H_{x}.$$
It is easy to see that $\varphi$ is a trace if and only if $Q_{x}=\text{id}$ for all $x\in\irr$. In this case $||E_{x}||=\sqrt{n_{x}}$ where $n_{x}$ is the dimension of $H_{x}$ and $J_{x}$ is an anti-unitary operator.

\begin{notation}
Let $C(\G)_{s}$ be the vector space spanned by the coefficients of all irreducible representations of $\G$. Then $C(\G)_{s}$ is a dense unital *-subalgebra of $C(\G)$. Let $C(\G_{\text{max}})$ be the maximal $C^{*}$ completion of the unital *-algebra $C(\G)_{s}$. $C(\G_{\text{max}})$ has a canonical structure of a compact quantum group. This quantum group is denoted by $\G_{\text{max}}$ and it is called the \textit{maximal quantum group}.
\end{notation}

A \textit{morphism of compact quantum groups} $\pi\,:\,\G\rightarrow\Hq$ is a unital *-homomor\-phism from $C(\G_{\text{max}})$  to $C(\Hq_{\text{max}})$ such that $\Delta_{\Hq}\circ\pi=(\pi\otimes\pi)\circ\Delta_{\G}$, where $\Delta_{\G}$ and $\Delta_{\Hq}$ denote the comultiplications for $\G_{\text{max}}$ and $\Hq_{\text{max}}$ respectively. We will need the following easy Lemma.

\begin{lemma}\label{LemSurj}
Let $\pi$ be a surjective morphism of compact quantum group from $\G$ to $\Hq$ and $\tilde{\pi}$ be the surjective *-homomorphism from $C(\G_{\text{max}})$  to $C(\Hq)$ obtained by composition of $\pi$ with the canonical surjection $C(\Hq_{\text{max}})\rightarrow C(\Hq)$. Then for every irreducible unitary representation $v$ of $\Hq$ there exists an irreducible unitary representation $u$ of $\G$ such that $v$ is contained in the unitary representation $(\text{id}\ot\tilde{\pi})(u)$.
\end{lemma}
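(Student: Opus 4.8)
The plan is to descend to the dense Hopf $*$-algebras and exploit the orthogonality of matrix coefficients encoded in the \GNS\ space $L^{2}(\Hq)$. First I would note that every irreducible representation $u^{y}$ of $\G$ is finite dimensional and that its matrix coefficients span a subspace of $C(\G)_{s}\subseteq C(\G_{\text{max}})$, so $\tilde{u}^{y}:=(\id\ot\tilde\pi)(u^{y})$ is a well-defined unitary in $\mathcal{B}(H_{y})\ot C(\Hq)$; since $\tilde\pi$ intertwines the comultiplications, $\tilde{u}^{y}$ is a unitary representation of $\Hq$, hence by the decomposition theorem for $\Hq$ a finite direct sum of irreducibles. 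Let $S\subseteq\irrH$ be the set of all $z$ occurring in some $\tilde{u}^{y}$, $y\in\irr$. The whole lemma reduces to the claim that $S=\irrH$.

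Next I would identify $\tilde\pi(C(\G)_{s})$. Because $\tilde\pi\big((\omega\ot\id)(u^{y})\big)=(\omega\ot\id)(\tilde{u}^{y})$, this image is exactly the linear span of the matrix coefficients of all the $\tilde{u}^{y}$. Writing $C(z)\subseteq C(\Hq)_{s}$ for the coefficient space of $u^{z}$: decomposing each $\tilde{u}^{y}$ shows its coefficients lie in $\bigoplus_{z\in S}C(z)$, while conversely, if $z\in S$ sits inside $\tilde{u}^{y}$ via an isometric intertwiner $T\colon H_{z}\to H_{y}$, then $(\omega_{\xi,\eta}\ot\id)(u^{z})=(\omega_{T\xi,T\eta}\ot\id)(\tilde{u}^{y})$, so $C(z)\subseteq\tilde\pi(C(\G)_{s})$. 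Hence $\tilde\pi(C(\G)_{s})=\bigoplus_{z\in S}C(z)$.

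Finally I would force $S=\irrH$ by a density argument in $L^{2}(\Hq)$. Since $C(\G)_{s}$ is dense in $C(\G_{\text{max}})$ and $\tilde\pi$ is a continuous surjection onto $C(\Hq)$, the subspace $\tilde\pi(C(\G)_{s})$ is dense in $C(\Hq)$, so $\tilde\pi(C(\G)_{s})\Omega$ is dense in $L^{2}(\Hq)$ (the \GNS\ map $a\mapsto a\Omega$ being norm-contractive with dense range). On the other hand $L^{2}(\Hq)=\bigoplus_{z\in\irrH}H_{z}\ot H_{\bar z}$, and the displayed formula for the \GNS\ representation gives $C(z)\Omega=H_{z}\ot H_{\bar z}$, hence $\tilde\pi(C(\G)_{s})\Omega=\bigoplus_{z\in S}H_{z}\ot H_{\bar z}$. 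If $S$ were a proper subset of $\irrH$ this would be contained in the proper closed subspace $\overline{\bigoplus_{z\in S}H_{z}\ot H_{\bar z}}$, contradicting density; therefore $S=\irrH$, which is precisely the assertion of the lemma.

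I expect the only delicate point to be the bookkeeping among $C(\G)_{s}$, $C(\G_{\text{max}})$ and $C(\G)$ (and their $\Hq$-counterparts), together with the facts that $\tilde\pi$ restricts to a coalgebra morphism with dense range at the Hopf-algebra level and that all representations in play are finite dimensional, so that no multiplier-algebra subtleties arise in forming $(\id\ot\tilde\pi)(u)$. Once that is in place the argument is just the Peter--Weyl decomposition plus the orthogonality of matrix coefficients.
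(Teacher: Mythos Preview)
Your argument is correct. Both your proof and the paper's hinge on the same fact---that $\tilde\pi(C(\G)_{s})$ is dense in $C(\Hq)$ because $\pi$ is surjective---but they exploit it differently. The paper argues by contradiction: assuming $v$ is not contained in any $(\id\ot\tilde\pi)(u)$, it invokes the averaging operator $(\id\ot\varphi)(v^{*}(a\ot 1)w)\in\text{Mor}(w,v)$ from \cite[Lemma 6.3]{VD}, gets zero for every coefficient coming from $\tilde\pi(C(\G)_{s})$, and then uses density as in \cite[Theorem 6.7]{VD} to force $(\id\ot\varphi)(v^{*}v)=0$. You instead work directly in the \GNS\ space $L^{2}(\Hq)$: you show $\tilde\pi(C(\G)_{s})$ is exactly the span of the coefficient spaces $C(z)$ for $z\in S$, push this through the cyclic vector, and read off $S=\irrH$ from the Peter--Weyl block decomposition. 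Your route is a bit more self-contained (no appeal to the cited lemmas from \cite{VD}) and perhaps more transparent about where orthogonality enters, while the paper's version is shorter once one is willing to quote those results; the underlying mechanism---orthogonality of matrix coefficients with respect to the Haar state---is the same in both.
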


\begin{proof}
Let $\varphi$ be the Haar state of $\Hq$ and $v$ be an irreducible unitary representation of $\Hq$ on the Hilbert space $H_{v}$. Because $v$ is irreducible it is sufficient to show that there exists a unitary irreducible representation $u$ of $\G$ such that Mor$(w,v)\neq\{0\}$, where $w=(\text{id}\ot\tilde{\pi})(u)$. Suppose that the statement is false. Then for all irreducible unitary representations $u$ of $\G$ on $H_{u}$, we have Mor$(w,v)=\{0\}$. By \cite{VD}, Lemma 6.3, for every operator $a\,:\, H_{v}\rightarrow H_{u}$ the operator $(\text{id}\ot\varphi)(v^{*}(a\ot 1)w)$ is in Mor$(w,v)$. It follows that for every irreducible unitary representation $u$ of $\G$ and every operator $a\,:\, H_{v}\rightarrow H_{u}$ we have $(\text{id}\ot\varphi)(v^{*}(a\ot 1)w)=0$. Using the same techniques as in \cite{VD}, Theorem 6.7, (because, by the surjectivity of $\pi$, $\tilde{\pi}(C(\G)_{s})$ is dense in $C(\Hq)$) we find $(\text{id}\ot\varphi)(v^{*}v)=0$. But this is a contradiction as $v^{*}v=1$.
\end{proof}

The collection of all finite-dimensional unitary representations (given with the concrete Hilbert spaces) of a compact quantum group $\G$ is a \textit{complete concrete monoidal} $W^{*}$-\textit{category}. We denote this category by $\mathcal{R}(\G)$. We say that $\mathcal{R}(\G)$ is finitely generated if there exists a finite subset $E\subset\irr$ such that for all finite-dimensional unitary representations $r$ there exists a finite family of morphisms $b_{k}\in\text{Mor}(r_{k},r)$, where $r_{k}$ is a product of elements of $E$, and $\sum_{k} b_{k}b_{k}^{*}=I_{r}$. It is not difficult to show that $\mathcal{R}(\G)$ is finitely generated if and only if $\G$ is a compact quantum group of matrices (see \cite{Wor2}).

\subsection{Discrete quantum groups}

A discrete quantum group is defined as the dual of a compact quantum group.

\begin{definition}
Let $\G$ be a compact quantum group. We define the dual \textit{discrete quantum group} $\widehat{\G}$ as follows:
$$c_{0}(\widehat{\G})=\bigoplus_{x\in\irr}^{c_{0}}\mathcal{B}(H_{x}),\quad l^{\infty}(\widehat{\G})=\bigoplus_{x\in\irr}^{\infty}\mathcal{B}(H_{x}).$$
We denote the minimal central projection of $l^{\infty}(\widehat{\G})$ by $p_{x}$, $x\in\irr$. We have a natural unitary $\mathbb{V}\in$M$(c_{o}(\widehat{\G})\ot C(\G))$ given by
$$\mathbb{V}=\bigoplus_{x\in\irr} u^{x}.$$
We have a natural comultiplication
$$\hat{\Delta}\,:\,l^{\infty}(\widehat{\G})\rightarrow l^{\infty}(\widehat{\G})\ot l^{\infty}(\widehat{\G})\,:\,(\hat{\Delta}\ot id)(\mathbb{V})=\mathbb{V}_{13}\mathbb{V}_{23}.$$
The comultiplication is given by the following formula
$$\hat{\Delta}(a)S=Sa,\quad\text{for all}\,\,a\in\mathcal{B}(H_{x}),\,S\in\text{Mor}(x,yz),\,x,y,z\in\irr.$$
\end{definition}
\begin{remark}
The maximal and reduced versions of a compact quantum group are different versions of the same underlying compact quantum group. This different versions give the same dual discrete quantum group, \ie{} $\Gh=\widehat{\G_{\text{red}}}=\widehat{\G_{\text{max}}}$. This means that $\Gh$, $\widehat{\G_{\text{red}}}$ and $\widehat{\G_{\text{max}}}$ have the same $C^{*}$ algebra, the same von Neumann algebra and the same comultiplication.
\end{remark}

A \textit{morphism of discrete quantum groups} $\hat{\pi}\,:\,\Gh\rightarrow\Hh$ is a non-degenerate *-homomorphism from $\coGh$  to M($\coHh$) such that $\hat{\Delta}_{\Hq}\circ\pi=(\pi\otimes\pi)\circ\hat{\Delta}_{\G}$, where $\hat{\Delta}_{\G}$ and $\hat{\Delta}_{\Hq}$ denote the comultiplication for $\Gh$ and $\Hh$ respectively. Every morphism of compact quantum groups $\pi\, :\, \G\rightarrow \Hq$ admits a canonical dual morphism of discrete quantum groups $\hat{\pi}\, :\, \Gh\rightarrow \Hh$. Conversely, every morphism of discrete quantum groups $\hat{\pi}\, :\, \Gh\rightarrow \Hh$ admits a canonical dual morphism of compact quantum groups $\pi\, :\, \G\rightarrow \Hq$. Moreover, $\pi$ is surjective (resp. injective) if and only if $\hat{\pi}$ is injective (resp. surjective).

We say that a discrete quantum group $\widehat{\G}$ is \textit{finitely generated} if the category $\mathcal{R}(\G)$ is finitely generated.

We will work with representations in the von Neumann algebra setting.

\begin{definition}
Let $\widehat{\G}$ be a discrete quantum group. A \textit{unitary representation} $U$ of $\widehat{\G}$ on a Hilbert space $H$ is a unitary $U\in l^{\infty}(\widehat{\G})\ot\bh$ such that :
$$(\hat{\Delta}\ot\id)(U)=U_{13}U_{23}.$$
\end{definition}

Consider the following maximal version of the unitary $\mathbb{V}$:
$$\mathcal{V}=\bigoplus_{x\in\irr} u^{x}\in\text{M}(c_{o}(\widehat{\G})\ot C(\G_{\text{max}})).$$

For every unitary representation $U$ of $\Gh$ on a Hilbert space $H$ there exists a unique *-homomorphism $\rho\, :\, C(\G_{\text{max}})\rightarrow\bh$ such that $(\text{id}\otimes\rho)(\mathcal{V})=U$.
\newline
\begin{notation}
Whenever $U$ is a unitary representation of $\widehat{\G}$ on a Hilbert space $H$ we write $U=\sum_{x\in\irr} U^{x}$ where $U^{x}=Up_{x}$ is a unitary in $\mathcal{B}(H_{x})\ot\bh$.
\end{notation}

The discrete quantum group $\linfG$ comes equipped with a natural modular structure. Let us define the following canonical states on $\bhx$:
$$\varphi_{x}(A)=\frac{\text{Tr}(Q_{x}A)}{\text{Tr}(Q_{x})},\quad\text{and}\quad\psi_{x}(A)=\frac{\text{Tr}(Q_{x}^{-1}A)}{\text{Tr}(Q_{x}^{-1})},\quad\text{for all}\,\,A\in\bhx.$$

The states $\varphi_{x}$ and $\psi_{x}$ provide a formula for the invariant normal semi-finite faithful (\nsf) weights on $\linfG$.

\begin{proposition}
The left invariant weight $\hat{\varphi}$ and the right invariant weight $\hat{\psi}$ on $\Gh$ are given by
$$\hat{\varphi}(a)=\sum_{x\in\irr}\dimq^{2}\varphi_{x}(ap_{x})\quad \text{and}\quad\hat{\psi}(a)=\sum_{x\in\irr}\dimq^{2}\psi_{x}(ap_{x}),$$
for all $a\in\linfG$ whenever this formula makes sense.
\end{proposition}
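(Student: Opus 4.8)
The plan is to verify directly that the two n.s.f.\ weights given by the displayed formulas are, respectively, left- and right-invariant, and then to invoke the uniqueness up to a positive scalar of the invariant weights of a locally compact quantum group; the particular normalization is the one fixed by $\hat\varphi(p_{1})=\hat\psi(p_{1})=1$, which both formulas obviously satisfy since $H_{1}=\C$ and $Q_{1}=1$. That each formula defines an n.s.f.\ weight is clear: normality, because it is a $\sigma$-weakly convergent sum of normal positive functionals; semifiniteness, because it is finite on every central projection $\sum_{x\in F}p_{x}$ with $F\subset\irr$ finite; and faithfulness, because $Q_{x}>0$ and $Q_{x}^{-1}>0$. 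So everything reduces to the invariance identities, which it suffices to check for $a\in\bhx$ (these span a core).

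For left-invariance I would first unpack the comultiplication. Fix $x,y,z\in\irr$ and an orthonormal basis $(S_{i})_{i}$ of $\text{Mor}(x,yz)$, i.e.\ isometries $H_{x}\to H_{y}\ot H_{z}$ with pairwise orthogonal ranges. From $\hat{\Delta}(a)S=Sa$, together with $\hat{\Delta}(p_{x})S=S$, one gets, for $a\in\bhx$,
$$\hat{\Delta}(a)(p_{y}\ot p_{z})=\sum_{i}S_{i}\,a\,S_{i}^{*}\in\mathcal{B}(H_{y}\ot H_{z}),$$
using that $\sum_{x}\sum_{i}S_{i}S_{i}^{*}=\id_{H_{y}\ot H_{z}}$ decomposes the identity into isotypic components. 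Applying $\id\ot\hat\varphi$ and cutting with $p_{y}$, the $p_{y}$-block of $(\id\ot\hat\varphi)\hat{\Delta}(a)$ equals $\sum_{z\in\irr}\text{dim}_{\text{q}}(z)\sum_{i}\big(\id_{H_{y}}\ot\text{Tr}_{H_{z}}(Q_{z}\,\cdot\,)\big)(S_{i}aS_{i}^{*})$. Now I would use that the family $(Q_{x})$ is multiplicative and natural for the monoidal structure --- $Q_{y\ot z}=Q_{y}\ot Q_{z}$ and $(Q_{y}\ot Q_{z})S_{i}=S_{i}Q_{x}$, hence $(\id\ot Q_{z})S_{i}=(Q_{y}^{-1}\ot\id)S_{i}Q_{x}$ --- and, pulling $Q_{y}^{-1}$ through the partial trace, this block becomes $Q_{y}^{-1}\Phi(Q_{x}a)$ with
$$\Phi(b):=\sum_{z\in\irr}\text{dim}_{\text{q}}(z)\sum_{i}(\id_{H_{y}}\ot\text{Tr}_{H_{z}})(S_{i}bS_{i}^{*}),\qquad b\in\bhx.$$
So left-invariance of $\hat\varphi$ is equivalent to the single identity $\Phi(b)=\text{Tr}(Q_{x})\,\text{Tr}(b)\,Q_{y}$, and this is the heart of the matter.

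This identity is a form of Woronowicz's orthogonality relations for the coefficients of $u^{x}$: using the G.N.S.\ description $(\omega_{\xi,\eta}\ot\id)(u^{x})\Omega=\|E_{x}\|^{-1}\xi\ot J_{x}\eta$ together with Frobenius reciprocity $\text{Mor}(x,yz)\cong\text{Mor}(\bar y\ot x,z)$, one transports the orthogonality relations to $l^{\infty}(\Gh)$ and sums over $z\in\irr$. Taking $\text{Tr}_{H_{y}}$ of both sides already forces the scalar, via $\sum_{z}\text{dim}_{\text{q}}(z)\dim\text{Mor}(\bar y\ot x,z)=\text{dim}_{\text{q}}(\bar y)\,\text{dim}_{\text{q}}(x)=\text{dim}_{\text{q}}(y)\,\text{Tr}(Q_{x})$, so the genuine content is the proportionality $\Phi(b)\propto Q_{y}$; this is classical and is exactly the assertion that the left Haar weight of a discrete quantum group has the stated form. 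The formula for $\hat\psi$ then follows by the mirror computation: tracing out the \emph{left} leg instead, the $p_{z}$-block of $(\hat\psi\ot\id)\hat{\Delta}(a)$ reduces, via $(Q_{y}^{-1}\ot Q_{z}^{-1})S_{i}=S_{i}Q_{x}^{-1}$, to $\sum_{y}\text{dim}_{\text{q}}(y)\sum_{i}(\text{Tr}_{H_{y}}\ot\id_{H_{z}})(S_{i}bS_{i}^{*})=\text{Tr}(Q_{x}^{-1})\,\text{Tr}(b)\,Q_{z}^{-1}$, which is the companion orthogonality relation (equivalently, the first identity applied in the contragredient picture). The only real obstacle is thus this one representation-theoretic identity; everything else is bookkeeping with the modular matrices $Q_{x}$.
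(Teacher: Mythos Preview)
The paper does not give a proof of this proposition: it appears in the preliminaries, stated without argument, as a known structural fact about discrete quantum groups (the standard reference being Woronowicz's orthogonality relations, transported to the dual side). There is therefore no ``paper's own proof'' to compare against.

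Your outline is sound. The unpacking of $\hat{\Delta}(a)(p_{y}\ot p_{z})$ via an orthonormal basis of $\text{Mor}(x,y\ot z)$, the use of $(Q_{y}\ot Q_{z})S_{i}=S_{i}Q_{x}$ to pull the modular matrix through the partial trace, and the reduction of left-invariance to the single identity $\Phi(b)=\text{Tr}(Q_{x})\,\text{Tr}(b)\,Q_{y}$ are all correct; your trace computation fixing the scalar via $\sum_{z}\text{dim}_{\text{q}}(z)\dim\text{Mor}(\bar y\ot x,z)=\text{dim}_{\text{q}}(y)\,\text{dim}_{\text{q}}(x)$ is also right. The one place where you stop short is the proportionality $\Phi(b)\propto Q_{y}$: you correctly identify this as the content of Woronowicz's orthogonality relations, but the sentence ``this is classical and is exactly the assertion that the left Haar weight of a discrete quantum group has the stated form'' reads as slightly circular. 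Since the orthogonality relations are proved on the \emph{compact} side, independently of any Haar weight on $\Gh$, you are entitled to invoke them --- but say so that way, rather than pointing back to the very statement you are proving. If you want the argument to be self-contained, rewrite $(\id_{H_{y}}\ot\text{Tr}_{H_{z}})(S_{i}bS_{i}^{*})$ using the invariant vectors $E_{z},E_{\bar z}$ to turn the $H_{z}$-trace into a contraction, use Frobenius reciprocity to replace $S_{i}$ by $\widetilde S_{i}\in\text{Mor}(\bar y\ot x,z)$, and then collapse $\sum_{z,i}\widetilde S_{i}\widetilde S_{i}^{*}=\id_{H_{\bar y}\ot H_{x}}$; the factor $Q_{y}$ appears from the normalized solutions of the conjugate equations for $y$.
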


A discrete quantum is unimodular (\ie{} the left and right invariant weights are equal) if and only if the Haar state $\varphi$ on the dual is a trace. In general, a discrete quantum group is not unimodular, and it is easy to check that the Radon-Nikodym derivative is given by
$$[D\hat{\psi}\,:\,D\hat{\varphi}]_{t}=\hat{\delta}^{it}\quad\text{where}\quad\hat{\delta}=\sum_{x\in\irr} Q_{x}^{-2}p_{x}.$$
The positive self-adjoint operator $\hat{\delta}$ is called the \textit{modular element}: it is affiliated with $\coGh$ and satisfies $\hat{\Delta}(\hat{\delta})=\hat{\delta}\otimes\hat{\delta}$.

The following Proposition is very easy to prove.

\begin{proposition}\label{Propeigenvalues}
Let $\Gamma$ be the subset of $\R^{*}_{+}$ consisting of all the eigenvalues of the operators $Q_{x}^{-2}$ for $x\in\irr$. Then $\Gamma$ is a subgroup of $\R^{*}_{+}$ and Sp$(\widehat{\delta})=\Gamma\cup\{0\}$.
\end{proposition}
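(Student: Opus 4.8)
The plan is to show that $\Gamma$ is closed under products and inverses, using the tensor product and contragredient operations on $\irr$. The key observation is that the operators $Q_x$ are multiplicative with respect to the tensor product of representations: if $x,y\in\irr$ and $z\subseteq x\ot y$, then the isometry $S\in\text{Mor}(z,x\ot y)$ relates $Q_z$ to the restriction of $Q_x\ot Q_y$, because the invariant vectors $E_x$ (hence the maps $J_x$, hence $Q_x=J_x^*J_x$) behave well under tensoring. Concretely, for all $x,y\in\irr$ one has $Q_{x\ot y}=Q_x\ot Q_y$ as operators on $H_x\ot H_y$, and since $x\ot y$ decomposes as a direct sum of irreducibles $z$ via isometries $S_z\in\text{Mor}(z,x\ot y)$, the operator $Q_z$ is unitarily conjugate (via $S_z$) to the compression $S_z^*(Q_x\ot Q_y)S_z$. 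Because $Q_x\ot Q_y$ commutes with each $S_zS_z^*$ (as $S_z$ intertwines $z$ with $x\ot y$ and $Q$ is determined by the category structure), we actually get that $S_z^*(Q_x\ot Q_y)S_z=Q_z$ exactly, so $\text{Sp}(Q_z)\subseteq\text{Sp}(Q_x\ot Q_y)=\text{Sp}(Q_x)\cdot\text{Sp}(Q_y)$.

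With this multiplicativity in hand, the argument for $\Gamma$ being a group runs as follows. First, $1\in\Gamma$ since $Q_1=\id$ (the trivial representation has $J_1$ anti-unitary). Second, if $\lambda\in\Gamma$ is an eigenvalue of $Q_x^{-2}$, then $\lambda^{-1}$ is an eigenvalue of $Q_x^{2}$; but $Q_{\bar x}=(J_xJ_x^*)^{-1}$ while $Q_x=J_x^*J_x$, so $Q_{\bar x}^{-1}=J_xJ_x^*$ has the same nonzero spectrum as $J_x^*J_x=Q_x$, hence $Q_{\bar x}^{-2}$ and $Q_x^{2}$ have the same spectrum, and $\lambda^{-1}\in\Gamma$. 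Third, for closure under products: given $\lambda\in\text{Sp}(Q_x^{-2})$ and $\mu\in\text{Sp}(Q_y^{-2})$, pick any irreducible $z\subseteq x\ot y$ (nonempty since $x\ot y$ decomposes into irreducibles); by the multiplicativity $Q_z^{-2}=S_z^*(Q_x^{-2}\ot Q_y^{-2})S_z$, and one checks that every eigenvalue of $Q_x^{-2}\ot Q_y^{-2}$, in particular $\lambda\mu$, is realized on \emph{some} irreducible summand $z$ of $x\ot y$ — this is because the eigenspaces of $Q_x^{-2}\ot Q_y^{-2}$ are spanned by tensor products of eigenvectors, and the full space $H_x\ot H_y$ is the orthogonal sum of the $S_z(H_z)$. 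Hence $\lambda\mu\in\Gamma$, and $\Gamma$ is a subgroup of $\R_+^*$.

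Finally, for the spectrum of $\hat\delta=\sum_{x\in\irr}Q_x^{-2}p_x$: since this is a direct sum (over the $\infty$-direct sum $\linfG=\bigoplus^\infty\bhx$) of the finitely-many-valued positive operators $Q_x^{-2}$, its spectrum is the closure of $\bigcup_x \text{Sp}(Q_x^{-2})=\overline{\Gamma}$. The point is that $\Gamma\subseteq\text{Sp}(\hat\delta)$ is immediate, and conversely any point of $\text{Sp}(\hat\delta)$ is a limit of eigenvalues $Q_x^{-2}$; to conclude $\text{Sp}(\hat\delta)=\Gamma\cup\{0\}$ one needs that $\overline{\Gamma}=\Gamma\cup\{0\}$ \emph{as a subset of $[0,\infty)$}, which holds because a subgroup of $\R_+^*$ is either discrete (hence already closed in $\R_+^*$, and its closure in $[0,\infty)$ adds at most the point $0$) or dense in $\R_+^*$ (in which case its closure in $[0,\infty)$ is all of $[0,\infty)=\overline{\Gamma}\cup\{0\}$, consistent with the claim once one notes $0\notin\Gamma$). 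The main obstacle is verifying the multiplicativity $Q_{x\ot y}=Q_x\ot Q_y$ and the fact that it passes correctly to irreducible subrepresentations via the intertwiners $S_z$; this is a standard but slightly delicate computation with the vectors $E_x$ and the normalization conventions fixed above, and everything else is then formal.
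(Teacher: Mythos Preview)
Your approach is essentially the same as the paper's: both use $J_{\bar x}=J_x^{-1}$ to get closure of $\Gamma$ under inverses, and the intertwining relation $S Q_z=(Q_x\ot Q_y)S$ for an isometry $S\in\text{Mor}(z,x\ot y)$ to get closure under products. The paper states this in two lines; you have simply unpacked why every eigenvalue of $Q_x^{-2}\ot Q_y^{-2}$ actually appears on some irreducible summand, which is the right thing to check.

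One small wrinkle: your discussion of $\text{Sp}(\hat\delta)$ in the dense case does not quite close. If $\Gamma$ is dense in $\R_+^*$ then $\overline{\Gamma}=[0,\infty)$, which is \emph{not} equal to $\Gamma\cup\{0\}$ unless $\Gamma=\R_+^*$ (impossible, since $\Gamma$ is countable). The paper's proof is equally silent on this point, and for the only application (Proposition~\ref{trace}) all that is really needed is that the eigenvalues of $\hat\delta$ form the group $\Gamma$ and that functional calculus $f\mapsto f(\hat\delta)$ gives an injective morphism $c_0(\Gamma)\to\text{M}(c_0(\Gh))$; this follows directly from $\hat\delta=\sum_x Q_x^{-2}p_x$ without any closure argument. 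So the imprecision is in the formulation of the proposition rather than in your strategy.
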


\begin{proof}
Note that, because $J_{\bar{x}}=J_{x}^{-1}$, the eigenvalues of $Q_{\bar{x}}$ are the inverse of the eigenvalues of $Q_{x}$. Using the formula $SQ_{z}=Q_{x}\ot Q_{y} S$, when $z\subset x\ot y$ and $S\in\text{Mor}(z,x\ot y)$ is an isometry, the Proposition follows immediately.
\end{proof}

\subsection{Discrete quantum subgroups}

Let $\G$ be a compact quantum group with representation category $\mathcal{C}$. Let $\mathcal{D}$ be a full subcategory such that $1_{\mathcal{C}}\in\mathcal{D}$, $\mathcal{D}\otimes\mathcal{D}\subset\mathcal{D}$ and $\overline{\mathcal{D}}=\mathcal{D}$. By the Tannaka-Krein Reconstruction Theorem of Woronowicz \cite{Wor2} we know that there exists a compact quantum group $\mathbb{H}$ such that the representation category of $\mathbb{H}$ is $\mathcal{D}$. We say that $\Hh$ is a \textit{discrete quantum subgroup} of $\Gh$. We have $\irrH\subset\irr$. We collect some easy observations in the next proposition. We denote by a subscript $\Hq$ the objects associated to $\Hq$.
\newpage

\begin{proposition}
Let $p=\sum_{x\in\irrH} p_{x}$. We have:
\begin{enumerate}
\item $\hat{\Delta}(p)(p\ot 1)=p\ot p$;
\item $l^{\infty}(\Hh)=p(l^{\infty}(\Gh))$;
\item $\hat{\Delta}_{\Hq}(a)=\hat{\Delta}(a)(p\ot p)$ for all $a\in\linfH$;
\item $\hat{\varphi}(p.)=\hat{\varphi}_{\Hq}$ and  $\hat{\delta}_{\Hq}=p\hat{\delta}$.
\end{enumerate}
\end{proposition}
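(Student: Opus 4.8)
The plan is to convert everything into the combinatorics of $\irr$ and $\irrH$ and then to read off the four statements from the explicit block-diagonal descriptions of $\linfG$, $\hat\Delta$, $\hat\varphi$ and $\hat\delta$. Put $\irrH=\{x\in\irr:x\in\mathcal{D}\}$; since $\mathcal{D}$ is the representation category of $\Hq$ we may choose the representatives $u^{x}$ ($x\in\irrH$) of $\Hq$ on the same Hilbert spaces $H_{x}$ as for $\G$, so that $\linfH=\bigoplus^{\infty}_{x\in\irrH}\mathcal{B}(H_{x})$. Because $\mathcal{D}$ is full, $\ot$-closed, conjugate-closed, contains the trivial object, and (being a representation category) is closed under subobjects and direct sums, the set $\irrH$ satisfies: $1\in\irrH$; $\bar x\in\irrH$ whenever $x\in\irrH$; and every irreducible subobject of $x\ot y$ lies in $\irrH$ whenever $x,y\in\irrH$. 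Combining this with Frobenius reciprocity, $\text{Mor}(x,y\ot z)\cong\text{Mor}(z,\bar y\ot x)$, gives the fact I will use for (1): \emph{if $x,y\in\irrH$ and $\text{Mor}(x,y\ot z)\neq\{0\}$, then $z\in\irrH$}. Fullness of $\mathcal{D}$ also yields $\text{Mor}_{\Hq}(x,yz)=\text{Mor}(x,yz)$ for $x,y,z\in\irrH$; in particular the invariant vectors $E_{x}\in H_{x}\ot H_{\bar x}$ and the normalizations $\|E_{x}\|=\|E_{\bar x}\|$, $J_{\bar x}=J_{x}^{-1}$ are the same for $\G$ and $\Hq$, so $Q_{x}^{\Hq}=Q_{x}$, the states $\varphi_{x}$ coincide, and the quantum dimensions agree, for every $x\in\irrH$.

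With these preliminaries, (2) is immediate: $p\,\linfG=\bigoplus^{\infty}_{x\in\irrH}\mathcal{B}(H_{x})=\linfH$. For (1) I would use that, by the defining relation $\hat\Delta(p_{x})S=S$ for $S\in\text{Mor}(x,yz)$ together with unitality of $\hat\Delta$, the $(y,z)$-block of $\hat\Delta(p_{x})$ inside $\linfG\ot\linfG=\prod_{y,z}\mathcal{B}(H_{y}\ot H_{z})$ is the orthogonal projection of $H_{y}\ot H_{z}$ onto its $x$-isotypic component; hence the $(y,z)$-block of $\hat\Delta(p)=\sum_{x\in\irrH}\hat\Delta(p_{x})$ is the projection onto the largest subrepresentation of $y\ot z$ lying in $\mathcal{D}$. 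Multiplying by $p\ot 1$ annihilates the blocks with $y\notin\irrH$; if $y\in\irrH$ the block equals $1_{H_{y}\ot H_{z}}$ when also $z\in\irrH$ (as then $y\ot z\in\mathcal{D}$), and equals $0$ when $z\notin\irrH$ (by the Frobenius statement no $x\in\irrH$ embeds in $y\ot z$). Summing the blocks gives $\hat\Delta(p)(p\ot 1)=\sum_{y,z\in\irrH}p_{y}\ot p_{z}=p\ot p$.

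For (3), fix $a\in\mathcal{B}(H_{x})$ with $x\in\irrH$. By (2) and the factor $p\ot p$ we have $\hat\Delta(a)(p\ot p)\in\linfH\ot\linfH$, and since $(p\ot p)S=S$ for all $S\in\text{Mor}(x,yz)$ with $y,z\in\irrH$, this element satisfies $\hat\Delta(a)(p\ot p)S=\hat\Delta(a)S=Sa$. As the relations $TS=Sa$ over all such $S$ (using $\text{Mor}_{\Hq}(x,yz)=\text{Mor}(x,yz)$) determine $T=\hat\Delta_{\Hq}(a)$ inside $\prod_{y,z\in\irrH}\mathcal{B}(H_{y}\ot H_{z})$, we conclude $\hat\Delta_{\Hq}(a)=\hat\Delta(a)(p\ot p)$ on generators, and normality extends this to all of $\linfH$. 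For (4), using that $Q_{x}^{\Hq}=Q_{x}$ and the $\varphi_{x}$, $\dimq$ agree for $x\in\irrH$, and that $pa=a$ for $a\in\linfH$, one computes $\hat\varphi_{\Hq}(a)=\sum_{x\in\irrH}\dimq^{2}\varphi_{x}(ap_{x})=\sum_{x\in\irr}\dimq^{2}\varphi_{x}((pa)p_{x})=\hat\varphi(pa)$ whenever this makes sense, and likewise $\hat\delta_{\Hq}=\sum_{x\in\irrH}Q_{x}^{-2}p_{x}=p\,\hat\delta$, the product $p\,\hat\delta$ being legitimate since $p$ is central in $\linfG$.

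The only genuinely delicate step is the first paragraph — extracting the closure properties of $\irrH$ (above all the Frobenius-reciprocity consequence) and the identity $Q_{x}^{\Hq}=Q_{x}$ from the hypotheses on $\mathcal{D}$. Everything after that is bookkeeping with the block-diagonal models, and the ``whenever this makes sense'' caveat in (4) is just the usual one for the unbounded weight $\hat\varphi$ and the affiliated element $\hat\delta$.
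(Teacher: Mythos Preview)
Your proof is correct and follows essentially the same route as the paper: both identify the $(y,z)$-block of $\hat\Delta(p_{x})$ as the projection onto the $x$-isotypic component of $y\ot z$ and then invoke the Frobenius-type closure property ``$y\subset z\ot x$ with $y,z\in\irrH$ forces $x\in\irrH$'' to obtain (1). The paper simply asserts this closure property and declares (2)--(4) ``obvious'', whereas you supply the Frobenius justification and spell out why $Q_{x}^{\Hq}=Q_{x}$ and the block computations for (3), (4); this extra care is welcome but does not constitute a different argument.
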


\begin{proof}
For $x,y,z\in\irr$ such that $y\subset z\otimes x$, we denote by $p_{y}^{z\otimes x}\in\text{End}(x\otimes y)$ the projection on the sum of all sub-representations equivalent to $y$. Note that
\begin{eqnarray}\label{Delta}
\hat{\Delta}(p_{y})(p_{z}\otimes p_{x})=
\left\{\
\begin{array}{ll}
p_{y}^{z\ot x} &  \text{if}\,\,y\subset z\ot x,\\
0 & \text{otherwise}.
\end{array}\right.
\end{eqnarray}
Thus:
$$\hat{\Delta}(p)(p_{z}\ot p_{x})=\sum_{y\in \irrH,\,y\subset z\ot x} p^{z\ot x}_{y}.$$
Note that if $y\subset z\ot x$ and $y,\,z\in\irrH$ then $x\in\irrH$. It follows that:
$$\hat{\Delta}(p)(p\otimes p_{x})=
\left\{\
\begin{array}{ll}
p\ot p_{x} &  \text{if}\,\,x\in\irrH,\\
0 & \text{otherwise}.
\end{array}\right.$$
Thus, $\hat{\Delta}(p)(p\ot 1)=p\ot p$. The other assertions are obvious.
\end{proof}

We introduce the following equivalence relation on $\irr$ (see \cite{Verg}): if $x,y\in\irr$ then $x\sim y$ if and only if there exists $t\in\irrH$ such that $x\subset y\otimes t$. We define the right action of $\Hh$ on $\linfG$ by translation:
$$\alpha\,:\,\linfG\rightarrow \linfG\otimes\linfH,\quad\alpha(a)=\hat{\Delta}(a)(1\otimes p).$$
Using $\hat{\Delta}(p)(p\ot 1)=p\ot p$ and $\hat{\Delta}_{\Hq}=\hat{\Delta}(.)(p\ot p)$ it is easy to see that $\alpha$ satisfies the following equations:
$$(\alpha\otimes\id)\alpha=(\id\otimes\hat{\Delta}_{\Hq})\alpha\quad\text{and}\quad
(\hat{\Delta}\otimes\id)\alpha=(\id\otimes\alpha)\hat{\Delta}.$$
The first equality means that $\alpha$ is a right action of $\Hh$ on $\linfG$. Let $\linfGmodH$ be the set of fixed points of the action $\alpha$:
$$\linfGmodH:=\{a\in\linfG,|\,\alpha(a)=a\otimes 1\}.$$
Using the second equality for $\alpha$ it is easy to see that:
$$\hat{\Delta}(\linfGmodH)\subset \linfG\otimes\linfGmodH.$$
Thus the restriction of $\hat{\Delta}$ to $\linfGmodH$ gives an action of $\Gh$ on $\linfGmodH$. We denote this action by $\beta$.

\begin{proposition}
Let $T_{\alpha}=(\id\otimes\hat{\varphi}_{\Hq})\alpha$ be the normal faithful operator valued weight from $\linfG$ to $\linfGmodH$ associated to $\alpha$. $T_{\alpha}$ is semi-finite and there exists a unique \nsf{} weight $\theta$ on $\linfGmodH$ such that $\hat{\varphi}=\theta\circ T_{\alpha}$.
\end{proposition}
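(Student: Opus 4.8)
The plan is to build $\theta$ directly as the weight induced by $\hat\varphi$ through the operator-valued weight $T_\alpha$, so the real content is to establish that $T_\alpha$ is semi-finite and that it is compatible with $\hat\varphi$ in the sense needed to invoke Haagerup's theory of operator-valued weights. First I would verify that $T_\alpha = (\id\ot\hat\varphi_{\Hq})\alpha$ indeed takes values in $\linfGmodH$: for $a\ge 0$, applying $\alpha$ to $T_\alpha(a)$ and using the identity $(\alpha\ot\id)\alpha = (\id\ot\hat\Delta_{\Hq})\alpha$ together with the left invariance of $\hat\varphi_{\Hq}$ under $\hat\Delta_{\Hq}$, i.e. $(\id\ot\hat\varphi_{\Hq})\hat\Delta_{\Hq} = \hat\varphi_{\Hq}(\cdot)1$, gives $\alpha(T_\alpha(a)) = T_\alpha(a)\ot 1$. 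This is the standard ``averaging gives a fixed point'' computation; it is routine and I would only sketch it. That $T_\alpha$ is normal and faithful is inherited from normality and faithfulness of $\hat\varphi_{\Hq}$ together with faithfulness of $\alpha$ (it is an injective $*$-homomorphism).

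The key step is semi-finiteness of $T_\alpha$. Here I would exploit the explicit direct-sum structure: $\linfG = \bigoplus_{x\in\irr}^\infty \bh_{x}$ and $\linfH = \bigoplus_{t\in\irrH}^\infty \bh_{t}$, and $p = \sum_{t\in\irrH} p_t$. Because $\hat\Delta(p_y)(p_z\ot p_x)$ is the projection $p_y^{z\ot x}$ when $y\subset z\ot x$ (formula (\ref{Delta}) in the preceding proof), the action $\alpha(a) = \hat\Delta(a)(1\ot p)$ restricted to $\bh_x$ is a genuine (completely positive, unital-ish) map into $\bh_x\ot(\bigoplus_{t\in\irrH}\bh_t)$, and on each block the weight $\hat\varphi_{\Hq}$ is a positive scalar multiple of a faithful state $\varphi_t$, which is finite. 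Hence for each $x$ and each matrix unit $e\in\bh_x$, $T_\alpha(e^*e)$ is a bona fide element of $\linfGmodH$ (a norm-convergent expression with finite ``mass''), so the span of such elements is weak-$*$ dense in $\linfGmodH$ and lies in the definition domain of $T_\alpha$. I would phrase this using the equivalence relation $\sim$ on $\irr$: the fixed-point algebra $\linfGmodH$ decomposes along $\sim$-classes, and on each class $T_\alpha$ amounts to a finite partial trace over the ``$\Hh$-direction'', which is manifestly semi-finite. This is the step I expect to be the main obstacle — not because it is deep, but because one must be careful that the algebraic core on which everything is transparent (finite sums of matrix blocks) is large enough, and that the weights involved are $\sigma$-finite on that core so that the formal computation is legitimate.

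Granting semi-finiteness, the existence and uniqueness of $\theta$ is then immediate from the general theory: if $P$ is a von Neumann subalgebra of $M$ and $T\colon M^+ \to \widehat{P^+}$ is a normal semi-finite faithful operator-valued weight, then for any \nsf{} weight $\hat\varphi$ on $M$ there is a unique \nsf{} weight $\theta$ on $P$ with $\hat\varphi = \theta\circ T$, provided $\hat\varphi$ is built from a weight on $P$ in this way — more precisely, one checks that $\hat\varphi$ is itself $\alpha$-invariant in the appropriate sense (which follows from left invariance of $\hat\varphi$ and $(\hat\Delta\ot\id)\alpha = (\id\ot\alpha)\hat\Delta$ together with $\hat\varphi(p\,\cdot\,) = \hat\varphi_{\Hq}$), so that $\theta := \hat\varphi\restriction_{\linfGmodH}$, suitably interpreted on the domain where $T_\alpha$ is defined, does the job; uniqueness is because $T_\alpha$ is faithful and semi-finite so its range is weak-$*$ dense and determines $\theta$ on a core. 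I would close by noting that compatibility of $\theta$ with the action $\beta$ (relative invariance) is not asserted in the statement and so need not be proved here, though it would follow from the second intertwining relation for $\alpha$.
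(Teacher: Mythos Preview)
Your outline has the right shape, but there are two genuine gaps.

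\textbf{Semi-finiteness.} You assert that for each $x\in\irr$ the element $T_\alpha(p_x)$ is a bona fide (bounded) element of $\linfGmodH$, describing it as ``a finite partial trace over the $\Hh$-direction''. But an equivalence class for $\sim$ is in general infinite (it is a coset of $\irrH$), and the sum defining $T_\alpha(p_x)p_z$ involves \emph{all} $t\in\irrH$ with $x\subset z\ot t$; while this is finite for each fixed $z$, nothing you have written gives a uniform bound over $z$. The paper obtains exactly this bound by a one-line trick you are missing: enlarge the sum from $\irrH$ to all of $\irr$ and recognise the result as $(\id\ot\hat\varphi)\hat\Delta(p_x)p_z = \hat\varphi(p_x)p_z = \dimq^2 p_z$, using left invariance of $\hat\varphi$ on $\linfG$. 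This yields $T_\alpha(p_x)\leq \dimq^2\cdot 1$, hence $T_\alpha$ is semi-finite. Without this comparison with the full Haar weight the argument does not close.

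\textbf{Existence and uniqueness of $\theta$.} The proposal ``$\theta:=\hat\varphi\restriction_{\linfGmodH}$, suitably interpreted'' does not work: the restriction of an \nsf{} weight to a sub-von Neumann algebra need not be semi-finite, and there is no general principle that an \nsf{} weight on $M$ factors through a given \nsf{} operator-valued weight $T\colon M\to P$. Haagerup's theory goes the other way (from weights on $P$ to weights on $M$). What is actually needed is a compatibility condition between $T_\alpha$ and the modular data of $\hat\varphi$. The paper checks precisely this: since $\hat\delta_{\Hq}=p\hat\delta$ one has $\alpha(\hat\delta^{-it})=\hat\delta^{-it}\ot\hat\delta_{\Hq}^{-it}$, and then invokes \cite{Kus}, Proposition~8.7, which in this setting guarantees the existence of a unique \nsf{} weight $\theta$ on $\linfGmodH$ with $\hat\varphi=\theta\circ T_\alpha$. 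Your sketch mentions neither the modular element nor any such factorisation criterion, so as written the existence step is unsupported.
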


\begin{proof}
It follows from Eq. $(\ref{Delta})$ that $T_{\alpha}(p_{y})p_{z}=0$ if $z\nsim y$. Take $z\sim y$, we have:
\begin{eqnarray*}
T_{\alpha}(p_{y})p_{z}
&=&\sum_{x\in\irrH}\text{dim}_{q}(x)^{2}(\id\otimes\varphi_{x})(p_{y}^{z\otimes x})\\
&\leq &\sum_{x\in\irr}\text{dim}_{q}(x)^{2}(\id\otimes\varphi_{x})(p_{y}^{z\otimes x})\\
&=&(\id\otimes\hat{\varphi})(\hat{\Delta}(p_{y}))p_{z}=\hat{\varphi}(p_{y})p_{z}\\
&=&\text{dim}_{q}(y)^{2}p_{z}.
\end{eqnarray*}
It follows that $T_{\alpha}(p_{y})<\infty$ for all $y$. This implies that $T_{\alpha}$ is semi-finite. Note that $\alpha(\delta^{-it})=\delta^{-it}\otimes\delta_{\Hq}^{-it}$. It follows from \cite{Kus}, Proposition 8.7, that there exists a unique \nsf{} weight $\theta$ on $\linfGmodH$ such that $\hat{\varphi}=\theta\circ T_{\alpha}$.
\end{proof}

Denote by $\ldeGmodH$ the \GNS{} space of $\theta$ and suppose that $\linfGmodH\subset\mathcal{B}(\ldeGmodH)$. Let $U^{*}\in\linfG\otimes\mathcal{B}(\ldeGmodH)$ be the unitary implementation of $\beta$ associated to $\theta$ in the sense of \cite{Vaes}. Then $U$ is a unitary representation of $\Gh$ on $\ldeGmodH$ and $\beta(x)=U^{*}(1\otimes x)U$. We call $U$ the \textit{quasi-regular} representation of $\Gh$ modulo $\Hh$.

\begin{lemma}\label{Hinvariant}
We have $p\in\linfGmodH\cap\mathcal{N}_{\theta}$. Put $\xi=\Lambda_{\theta}(p)$. If $\Gh$ is unimodular then $U^{x}\eta\ot\xi=\eta\ot\xi$ for all $x\in\irrH$ and all $\eta\in H_{x}$.
\end{lemma}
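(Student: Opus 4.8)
The plan is to prove the two assertions separately; only the second uses the unimodularity of $\Gh$.

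To see $p\in\linfGmodH$ I would check $\alpha(p)=p\ot 1$, i.e. $\hat\Delta(p)(1\ot p)=p\ot p$, by the same bookkeeping with equation $(\ref{Delta})$ used to prove $\hat\Delta(p)(p\ot 1)=p\ot p$: summing $(\ref{Delta})$ over $\irrH$ gives $\hat\Delta(p)(p_z\ot p_x)=\sum_{w\in\irrH,\,w\subset z\ot x}p_w^{z\ot x}$, which for $x\in\irrH$ vanishes if $z\notin\irrH$ (a summand would produce $w\in\irrH$ with $w\subset z\ot x$, hence $z\subset w\ot\bar x$ by Frobenius reciprocity, and $w\ot\bar x$ being a representation of $\Hq$ forces $z\in\irrH$) and equals $p_z\ot p_x$ if $z,x\in\irrH$; summing over $z\in\irr$ for fixed $x\in\irrH$ and then over $x\in\irrH$ gives $\hat\Delta(p)(1\ot p)=p\ot p$. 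To get $p\in\mathcal N_\theta$ I would compute $T_\alpha(p_1)$: from $(\ref{Delta})$, $\hat\Delta(p_1)=\sum_{z\in\irr}p_1^{z\ot\bar z}$, so $\alpha(p_1)=\hat\Delta(p_1)(1\ot p)=\sum_{z\in\irrH}p_1^{z\ot\bar z}$; applying $\id\ot\hat\varphi_\Hq$ (recall $\hat\varphi_\Hq$ is the restriction of $\hat\varphi$) and using that the $p_z$-component of the invariance relation $(\id\ot\hat\varphi)\hat\Delta(p_1)=\hat\varphi(p_1)1=1$ reads $\text{dim}_q(\bar z)^2(\id\ot\varphi_{\bar z})(p_1^{z\ot\bar z})=p_z$, I obtain $T_\alpha(p_1)=\sum_{z\in\irrH}p_z=p$. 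Hence $\theta(p)=\theta(T_\alpha(p_1))=\hat\varphi(p_1)=1<\infty$, so $p\in\mathcal N_\theta$ and moreover $\xi=\Lambda_\theta(p)$ is a unit vector.

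The heart of the second assertion is the identity $\hat\Delta(p)(p_y\ot 1)=p_y\ot p$ for every $y\in\irrH$. This again follows from $(\ref{Delta})$: for $y\in\irrH$ and $z\in\irr$, $\hat\Delta(p)(p_y\ot p_z)$ vanishes unless $z\in\irrH$ (if $w\in\irrH$ and $w\subset y\ot z$ then $z\subset\bar y\ot w$, a representation of $\Hq$, so $z\in\irrH$) and equals the full identity $p_y\ot p_z$ when $z\in\irrH$, since $y\ot z$ is then a representation of $\Hq$; sum over $z\in\irr$. Consequently, for $y\in\irrH$ and $b\in\mathcal B(H_y)\subset\linfG$, from $b=p_yb$ we get $\hat\Delta(p)(b\ot 1)=\hat\Delta(p)(p_y\ot 1)(b\ot 1)=b\ot p$. (This is exactly the relation a $\beta$-fixed element would satisfy after cutting the first leg by $p_y$, and it holds precisely for $y\in\irrH$ — which is why $\xi$ ends up invariant under $\Hh$ but not under all of $\Gh$.)

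Now I would use the unitary implementation. Because $\Gh$ is unimodular, $\hat\varphi$ and $\theta$ are traces and $\hat\delta=1$, so the implementation carries no modular corrections and $U^*$ satisfies the basic formula of \cite{Vaes}, $U^*(\Lambda_{\hat\varphi}(b)\ot\Lambda_\theta(a))=(\Lambda_{\hat\varphi}\ot\Lambda_\theta)(\hat\Delta(a)(b\ot 1))$, valid for $a\in\mathcal N_\theta$ and $b\in\mathcal N_{\hat\varphi}$ with $\hat\Delta(a)(b\ot 1)\in\mathcal N_{\hat\varphi\ot\theta}$. Taking $a=p$ and $b\in\mathcal B(H_y)$ with $y\in\irrH$, the previous paragraph gives $\hat\Delta(p)(b\ot 1)=b\ot p\in\mathcal N_{\hat\varphi\ot\theta}$ (as $\mathcal B(H_y)$ is finite dimensional and $\theta(p)<\infty$), so the right-hand side is $\Lambda_{\hat\varphi}(b)\ot\Lambda_\theta(p)=\Lambda_{\hat\varphi}(b)\ot\xi$; thus $U^*(\Lambda_{\hat\varphi}(b)\ot\xi)=\Lambda_{\hat\varphi}(b)\ot\xi$ for all such $b$. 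Since $\Lambda_{\hat\varphi}$ restricts to a bijection $\mathcal B(H_y)\to H_y\ot H_{\bar y}$, the vectors $\Lambda_{\hat\varphi}(b)$ (over $y\in\irrH$, $b\in\mathcal B(H_y)$) are total in $\bigoplus_{y\in\irrH}H_y\ot H_{\bar y}\subset L^2(\Gh)$, whence $U^*$ — equivalently $U$ — is the identity on $\big(\bigoplus_{y\in\irrH}H_y\ot H_{\bar y}\big)\ot\mathbb C\xi$. Finally, writing $U=\sum_{x\in\irr}U^x$, on the block $(H_y\ot H_{\bar y})\ot\ldeGmodH$ of $L^2(\Gh)\ot\ldeGmodH$ the operator $U$ acts through $(U^y)_{13}$, leaving the $H_{\bar y}$-leg fixed; hence $(U^y)_{13}(\eta\ot\eta'\ot\xi)=\eta\ot\eta'\ot\xi$ for all $\eta\in H_y$, $\eta'\in H_{\bar y}$, $y\in\irrH$, and for a fixed nonzero $\eta'$ this gives $U^x(\eta\ot\xi)=\eta\ot\xi$ for all $x\in\irrH$ and $\eta\in H_x$. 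The main obstacle is invoking the implementation formula of \cite{Vaes} in exactly the right form and checking its square-integrability hypothesis; both are routine here only because unimodularity makes every weight in sight tracial and the relevant subspaces are built from the finite-dimensional blocks $\mathcal B(H_y)$.
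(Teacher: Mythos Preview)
Your proposal follows essentially the same route as the paper: compute $T_\alpha(p_1)=p$ to get $p\in\linfGmodH\cap\mathcal N_\theta$ with $\theta(p)=1$, then use the implementation formula $V^*(\hat\Lambda(b)\ot\Lambda_\theta(a))=(\hat\Lambda\ot\Lambda_\theta)(\beta(a)(b\ot 1))$ together with $\hat\Delta(p)(p\ot 1)=p\ot p$ (your refined version $\hat\Delta(p)(p_y\ot 1)=p_y\ot p$ for $y\in\irrH$ is just the $p_y$-slice of this). Your direct verification of $\alpha(p)=p\ot 1$ is redundant, since $p=T_\alpha(p_1)$ already lands in the range of $T_\alpha$, hence in the fixed-point algebra.

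There is one point you gloss over that the paper handles explicitly. The formula $U^*(\hat\Lambda(b)\ot\Lambda_\theta(a))=(\hat\Lambda\ot\Lambda_\theta)(\beta(a)(b\ot 1))$ is not a direct consequence of \cite{Vaes} alone: one first needs $\theta$ to be $\beta$-invariant in order to \emph{define} the isometry $V^*$ by this formula, and only then does \cite{Vaes}, Proposition~4.3, identify $V$ with the canonical implementation $U$ (this last step is where unimodularity enters). The paper proves the $\beta$-invariance of $\theta$ via the identity $(\hat\Delta\ot\id)\alpha=(\id\ot\alpha)\hat\Delta$, yielding $(\omega\ot\theta)\beta(T_\alpha(y))=\theta(T_\alpha(y))\omega(1)$ and then approximating a general positive element from below by $T_\alpha(y_i)$'s. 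Your phrase ``unimodularity makes every weight in sight tracial'' is not the relevant mechanism here --- the invariance of $\theta$ holds without unimodularity (as the paper's Remark after the lemma points out); unimodularity is only used to match $V$ with $U$. With that step supplied, your argument is complete and matches the paper's.
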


\begin{proof}
Using $\hat{\Delta}(p_{1})(1\ot p_{x})=p_{1}^{\bar{x}\ot x}$ it is easy to see that $T_{\alpha}(p_{1})=p$. It follows that $p\in\linfGmodH$ and $\theta(p)=\hat{\varphi}(p_{1})=1$. Thus $p\in\mathcal{N}_{\theta}$. Let $x\in M^{+}$ such that $T_{\alpha}(x)<\infty$, $\omega\in\linfG_{*}^{+}$ and $\mu$ a \nsf{} weight on $\linfGmodH$. Using $(\hat{\Delta}\otimes\id)\alpha=(\id\otimes\alpha)\hat{\Delta}$ we find:
\begin{eqnarray}\notag
(\omega\otimes\mu)\beta(T_{\alpha}(x))
&=&(\omega\otimes\mu)\hat{\Delta}(T_{\alpha}(x))
=(\omega\otimes\mu)\hat{\Delta}((\id\otimes\hat{\varphi}_{\Hq})(\alpha(x)))\\\notag
&=&(\omega\otimes\mu\ot\hat{\varphi}_{\Hq})((\hat{\Delta}\otimes\id)\alpha(x))\\\notag
&=&(\omega\otimes\mu\ot\hat{\varphi}_{\Hq})((\id\otimes\alpha)\hat{\Delta}(x))\\\label{Eq0}
&=&(\omega\otimes\mu\circ T_{\alpha})\hat{\Delta}(x).
\end{eqnarray}
It follows that, for all $\omega\in\linfG_{*}^{+}$ and all $y\in\linfG^{+}$ such that $T_{\alpha}(y)<\infty$, we have:
$$(\omega\ot\theta)\beta(T_{\alpha}(y))=(\omega\ot\hat{\varphi})(\hat{\Delta}(y))=\hat{\varphi}(y)\omega(1)=\theta(T_{\alpha}(y))\omega(1).$$
Let $x\in\linfGmodH^{+}$. Because $T_{\alpha}$ is a faithful and semi-finite, there exists an increasing net of positive elements $y_{i}$ in $\linfG^{+}$ such that $T_{\alpha}(y_{i})<\infty$ for all $i$ and $\text{Sup}_{i}(T_{\alpha}(y_{i}))=x$. It follows that:
$$(\omega\ot\theta)\beta(x)=\text{Sup}((\omega\ot\theta)\beta(T_{\alpha}(y_{i})))=\text{Sup}(\theta(T_{\alpha}(y_{i})))\omega(1)=\theta(x)\omega(1),$$
for all $\omega\in\linfG_{*}^{+}$. This means that $\theta$ is $\beta$-invariant. Using this invariance we define the following isometry:
$$V^{*}(\hat{\Lambda}(x)\ot\Lambda_{\theta}(y))=(\hat{\Lambda}\ot\Lambda_{\theta})(\beta(y)(x\ot 1)).$$
Because $\Gh$ is unimodular we know from \cite{Vaes}, Proposition 4.3, that $V^{*}$ is the unitary implementation of $\beta$ associated to $\theta$ \ie{} $V=U$. Using $\hat{\Delta}(p)(p\ot 1)=p\ot p$, it follows that, for all $x\in\mathcal{N}_{\hat{\varphi}}$, we have:
$$U^{*}(p\hat{\Lambda}(x)\ot\Lambda_{\theta}(p))=(\hat{\Lambda}\ot\Lambda_{\theta})(\hat{\Delta}(p)(px\ot 1))
=p\hat{\Lambda}(x)\ot\Lambda_{\theta}(p).$$
This concludes the proof.
\end{proof}

\begin{remark}
For general discrete quantum groups it can be proved, as in \cite{Enock}, Th\'{e}or\`{e}me 2.9, that $V^{*}$ is a unitary implementing the action $\beta$ and, as in \cite{Vaes}, Proposition 4.3, that $V^{*}$ is the unitary implementation of $\beta$ associated to $\theta$. Thus the previous lemma is also true for general discrete quantum groups.
\end{remark}

\begin{lemma}\label{Invariant}
Suppose that $U$ has a non-zero invariant vector $\xi\in\ldeGmodH$. Then $\irr/\irrH$ is a finite set.
\end{lemma}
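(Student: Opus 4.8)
The plan is to prove that a non-zero invariant vector for $U$ forces the invariant weight $\theta$ on $\linfGmodH$ to be finite, and then to count classes using the identity $\hat{\varphi}=\theta\circ T_{\alpha}$. Assume $\|\xi\|=1$. Since $\beta(a)=U^{*}(1\otimes a)U$ for $a\in\linfGmodH$ and invariance of $\xi$ means $U^{x}(\eta\otimes\xi)=\eta\otimes\xi$ for all $x\in\irr$, $\eta\in H_{x}$ — hence $U(\zeta\otimes\xi)=\zeta\otimes\xi$ for every $\zeta$ — a direct computation shows that $\omega_{\xi}(a):=\langle a\xi,\xi\rangle$ satisfies $(\id\otimes\omega_{\xi})\beta(a)=\omega_{\xi}(a)\,1$ for all $a\in\linfGmodH$. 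Thus $\omega_{\xi}$ is a $\beta$-invariant normal state on $\linfGmodH$, non-trivial because $\xi\neq 0$.

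Next I would check that $\beta$, i.e.\ the restriction of $\hat{\Delta}$ to $\linfGmodH$, is ergodic. If $a\in\linfGmodH$ and $\hat{\Delta}(a)=1\otimes a$, I compare the $\mathcal{B}(H_{z})\otimes\mathcal{B}(H_{1})$-components of the two sides for each $z\in\irr$. Since $\text{Mor}(x,z\otimes 1)=\text{Mor}(x,z)$ vanishes unless $x=z$ and is then spanned by the canonical unitary $H_{z}\cong H_{z}\otimes H_{1}$, the defining relation $\hat{\Delta}(b)S=Sb$ makes that component of $\hat{\Delta}(a)$ equal to $ap_{z}$, while the same component of $1\otimes a$ is $p_{z}\otimes ap_{1}$ with $ap_{1}\in\mathcal{B}(H_{1})=\C$ a scalar. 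Hence $ap_{z}$ equals that scalar times $p_{z}$ for every $z\in\irr$, so $a\in\C 1$.

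Granting ergodicity, I would invoke the standard uniqueness of invariant weights. The support projection of the invariant state $\omega_{\xi}$ is $\beta$-invariant, hence equal to $1$, so $\omega_{\xi}$ is faithful; the Radon--Nikodym derivative $d\theta/d\omega_{\xi}$ is a positive self-adjoint operator affiliated with $\linfGmodH$ which is $\beta$-invariant because $\theta$ and $\omega_{\xi}$ both are, so all of its bounded spectral projections lie in $\C 1$ and $d\theta/d\omega_{\xi}$ is a positive scalar. Therefore $\theta$ is a finite positive multiple of $\omega_{\xi}$; in particular $\theta(1)<\infty$.

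To finish, for a class $C\in\irr/\irrH$ set $p_{C}=\sum_{x\in C}p_{x}$; just as in the proof that $\hat{\Delta}(p)(p\otimes 1)=p\otimes p$ one checks that $p_{C}$ is a central projection of $\linfGmodH$ and $\sum_{C}p_{C}=1$. Fix $y\in C$. The estimates in the proof that $T_{\alpha}$ is semi-finite give $T_{\alpha}(p_{y})p_{z}=0$ for $z\not\sim y$ and $T_{\alpha}(p_{y})p_{z}\leq\text{dim}_{q}(y)^{2}p_{z}$ for $z\sim y$, hence $T_{\alpha}(p_{y})\leq\text{dim}_{q}(y)^{2}p_{C}$; applying $\theta$ and using $\theta\circ T_{\alpha}=\hat{\varphi}$ and $\hat{\varphi}(p_{y})=\text{dim}_{q}(y)^{2}$ yields $\theta(p_{C})\geq 1$. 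Therefore $\#(\irr/\irrH)\leq\sum_{C}\theta(p_{C})=\theta(1)<\infty$. The point deserving the most care is the implication ``$\beta$ ergodic with an invariant normal state $\Rightarrow\theta$ finite'': one must verify that $\beta$-invariance of $\omega_{\xi}$ is inherited by its support projection and that the Radon--Nikodym derivative of two invariant weights is invariant — both classical in the group case but in need of proof or citation here. The remaining steps only rearrange objects already at hand.
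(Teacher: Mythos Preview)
Your argument and the paper's follow the same backbone: build the $\beta$-invariant normal state $\omega_{\xi}$, show it is faithful, exploit a uniqueness-of-invariant-weight statement, and finally count classes. The genuine difference is in the uniqueness step. The paper does \emph{not} compare $\theta$ and $\omega_{\xi}$ on $\linfGmodH$; instead it pushes everything down to $\linfG$ via $T_{\alpha}$, observes (using Eq.~(\ref{Eq0})) that $\omega_{\xi}\circ T_{\alpha}$ is a left-invariant n.s.f.\ weight on $\linfG$, and invokes uniqueness of the Haar weight to get $\omega_{\xi}\circ T_{\alpha}=c\,\hat{\varphi}$. Then a single element $a=\sum_{i}\dimq^{-2}p_{x_{i}}$ (one representative $x_{i}$ per class) gives the contradiction: $\hat{\varphi}(a)=\infty$ while $T_{\alpha}(a)\leq 1$, so $\omega_{\xi}(T_{\alpha}(a))\leq 1$. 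For faithfulness the paper also bypasses your ergodicity computation: from $\beta(1-s)\leq 1\otimes(1-s)$ it cites \cite{VaesPhD}, Lemma~6.4, directly.

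Your route is sound but, as you note, the step ``$\theta$ and $\omega_{\xi}$ both $\beta$-invariant $\Rightarrow$ $\theta=c\,\omega_{\xi}$'' needs real work: a Radon--Nikodym \emph{operator} $d\theta/d\omega_{\xi}$ affiliated with $\linfGmodH$ is not automatic from the Connes cocycle, and even granting it, its $\beta$-invariance must be proved. Likewise, ``support of an invariant state is $\beta$-invariant'' only yields $\beta(1-s)\leq 1\otimes(1-s)$, which is not a fixed-point equation, so your ergodicity lemma (fixed points $=\C 1$) does not literally apply; you still need the sub-invariance lemma the paper cites. The paper's detour through $\linfG$ buys you both of these for free, since uniqueness of the Haar weight on a locally compact quantum group is an off-the-shelf theorem.
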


\begin{proof}
Let $\xi\in\ldeGmodH$ be a normalized $U$-invariant vector. Using $\beta(x)=U^{*}(1\otimes x)U$ it is easy to see that $\omega_{\xi}$ is a $\beta$-invariant normal state on $\linfGmodH$, \ie{} $(\id\otimes\omega_{\xi})\beta(x)=\omega_{\xi}(x)1$ for all $x\in\linfGmodH$. Let $s$ be the support of $\omega_{\xi}$ and $e=1-s$. Let $\omega$ be a faithful normal state on $\linfG$. Because the support of $\omega\otimes\omega_{\xi}$ is $1\otimes s$ and $(\omega\otimes\omega_{\xi})\beta(e)=\omega_{\xi}(e)=0$ we find $\hat{\Delta}(e)=\beta(e)\leq1\otimes e$. It follows from \cite{VaesPhD}, Lemma 6.4, that $e=0$ or $e=1$. Because $\xi$ is a non-zero vector we have $e=0$. Thus $\omega_{\xi}$ is faithful. Let $x\in M^{+}$ such that $T_{\alpha}(x)<\infty$. By Eq. $(\ref{Eq0})$ we have:
$$
(\omega\ot\omega_{\xi}\circ T_{\alpha})(\hat{\Delta}(x))
=(\omega\ot\omega_{\xi})\beta(T_{\alpha}(x))
=\omega_{\xi}(T_{\alpha}(x))\omega(1),
$$
for all $\omega\in\linfG_{*}^{+}$. Because $T_{\alpha}$ is \nsf{}, it follows easily that $\omega_{\xi}\circ T_{\alpha}$ is a left invariant \nsf{} weight on $\Gh$. Thus, up to a positive constant, we have $\omega_{\xi}\circ T_{\alpha}=\hat{\varphi}$.

Suppose that $\irr/\irrH$ is infinite, and let $x_{i}\in\irr$, $i\in\N$ be a complete set of representatives of $\irr/\irrH$. Let $a$ be the positive element of $\linfG$ defined by $a=\sum_{i\geq 0}\frac{1}{\text{dim}_{q}(x_{i})^{2}}p_{x_{i}}$. Then we have $\hat{\varphi}(a)=+\infty$ and $T_{\alpha}(a)=\sum_{i}\sum_{x\simeq x_{i}}p_{x}=1<\infty$, which is a contradiction.
\end{proof}

\subsection{Property $T$ for von Neumann algebras}

Here we recall several facts from \cite{CJ}. If $M$ and $N$ are von Neumann algebras then a correspondence from $M$ to $N$ is a Hilbert space $H$ which is both a left $M$-module and a right $N$-module, with commuting normal actions $\pi_{l}$ and $\pi_{r}$ respectively. The triple $(H,\pi_{l},\pi_{r})$ is simply denoted by $H$ and we shall write $a\xi b$ instead of $\pi_{l}(a)\pi_{r}(b)\xi$ for $a\in M$, $b\in N$ and $\xi\in H$. We shall denote by $\mathcal{C}(M)$ the set of unitary equivalence classes of correspondences from $M$ to $M$. The standard representation of $M$ defines an element $L^{2}(M)$ of $\mathcal{C}(M)$, called the identity correspondence.

Given $H\in\mathcal{C}(M)$, $\epsilon>0$, $\xi_{1},\ldots,\xi_{n}\in H$, $a_{1},\ldots,a_{p}\in M$, let $\mathcal{V}_{H}(\epsilon,\xi_{i},a_{i})$ be the set of $K\in\mathcal{C}(M)$ for which there exist $\eta_{1},\ldots,\eta_{n}\in K$ with
$$|\langle a_{j}\eta_{i}a_{k},\eta_{i^{'}}\rangle -\langle a_{j}\xi_{i}a_{k},\xi_{i^{'}}\rangle|<\epsilon,\quad\text{for all}\,\,i,i^{'},j,k.$$
Such sets form a basis of a topology on $\mathcal{C}(M)$ and, following \cite{CJ}, $M$ is said to have property $T$ if there is a neighbourhood of the identity correspondence, each member of which contains $L^{2}(M)$ as a direct summand.

When $M$ is a $\rm{II}_{1}$ factor the property $T$ is easier to understand. A $\rm{II}_{1}$ factor $M$ has property $T$ if we can find $\epsilon>0$ and $a_{1},\ldots,a_{p}\in M$ satisfying the following condition: every $H\in\mathcal{C}(M)$ such that there exists $\xi\in H$, $||\xi||=1$, with $||a_{i}\xi-\xi a_{i}||<\epsilon$ for all $i$, contains a non-zero central vector $\eta$ (\ie{} $a\eta=\eta a$ for all $a\in M$). We recall the following Proposition from \cite{CJ}.

\begin{proposition}\label{propCJ}
If $M$ is a $\rm{II}_{1}$ factor with property $T$ then there exist $\epsilon>0$, $b_{1},\ldots,b_{m}\in M$ and $C>0$ with the following property: for any $\delta\leq\epsilon$, if $H\in\mathcal{C}(M)$ and $\xi\in H$ is a unit vector satisfying $||b_{i}\xi-\xi b_{i}||<\delta$ for all $1\leq i\leq m$, then there exists a unit central vector $\eta\in H$ such that $||\xi-\eta||<C\delta$.
\end{proposition}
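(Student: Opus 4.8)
The plan is to deduce this quantitative refinement directly from the definition of property $T$ for a $\mathrm{II}_1$ factor, using a compactness/contradiction argument to upgrade the qualitative ``contains a nonzero central vector'' conclusion into a uniform estimate $\|\xi-\eta\|<C\delta$. First I would fix, once and for all, the $\epsilon_0>0$ and $a_1,\dots,a_p\in M$ provided by the definition of property $T$: every $H\in\mathcal{C}(M)$ carrying a unit vector $\xi$ with $\|a_i\xi-\xi a_i\|<\epsilon_0$ for all $i$ has a nonzero central vector. Enlarging the finite set if necessary and rescaling, I may assume $\|a_i\|\le 1$. The $b_i$ in the statement will be these $a_i$ (possibly together with $1$), and the real content is to produce the constant $C$.

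The key step is the following claim: there exist $\epsilon\le\epsilon_0$ and $C>0$ so that whenever $H\in\mathcal{C}(M)$ and $\xi\in H$ is a unit vector with $\|a_i\xi-\xi a_i\|<\epsilon$ for all $i$, the orthogonal projection $\eta_0$ of $\xi$ onto the subspace $H_0\subset H$ of central vectors satisfies $\|\xi-\eta_0\|<\tfrac{C}{2}\epsilon$ (say); then $\eta:=\eta_0/\|\eta_0\|$ is a unit central vector, and for $\delta\le\epsilon$ one runs the same argument with $\delta$ in place of $\epsilon$ to get $\|\xi-\eta_0\|<\tfrac{C}{2}\delta$ and hence $\|\xi-\eta\|\le 2\|\xi-\eta_0\|<C\delta$. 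To prove the claim I argue by contradiction: if it fails, then for each $n$ there is $H_n\in\mathcal{C}(M)$ and a unit vector $\xi_n\in H_n$ with $\|a_i\xi_n-\xi_n a_i\|<1/n$ for all $i$, yet the distance from $\xi_n$ to the central subspace of $H_n$ exceeds $n\cdot\|a_i\xi_n-\xi_n a_i\|$ for some $i$ — equivalently, writing $c_n:=\max_i\|a_i\xi_n-\xi_n a_i\|\to 0$, the distance $d(\xi_n,(H_n)_0)\ge n\,c_n$. One now forms the correspondence $H:=\bigoplus_n H_n$ (an $M$-$M$ bimodule via the diagonal actions) and considers the vectors $\eta_n:=c_n^{-1}(\xi_n-P_n\xi_n)\in H_n\subset H$, where $P_n$ is the projection onto $(H_n)_0$; these are orthogonal to all central vectors of $H_n$, satisfy $\|\eta_n\|\le$ (the distance, divided by $c_n$, which is $\ge n$, so actually I should instead \emph{normalize}), so let me rather set $\eta_n:=(\xi_n-P_n\xi_n)/\|\xi_n-P_n\xi_n\|$, unit vectors with $\|a_i\eta_n-\eta_n a_i\|\le 2c_n/\|\xi_n-P_n\xi_n\|\le 2/n\to 0$ since $\|\xi_n-P_n\xi_n\|\ge n c_n$. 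Then the vector $\eta:=(\eta_n)_n$ lies in $H$ after suitable $\ell^2$-weighting: here one must be slightly careful — take instead a single correspondence built as an appropriate ultraproduct or direct integral, or weight the summands so that $\sum \lambda_n^2=1$ and the tail estimates survive; the upshot is a unit vector in a correspondence whose $a_i$-commutators are arbitrarily small but which is \emph{uniformly bounded away from the central vectors}, contradicting property $T$ applied to that correspondence (property $T$ furnishes a nonzero central vector, and a standard convexity/closest-point argument shows that under a small-commutator hypothesis the closest central vector is in fact close).

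The main obstacle — and the step requiring genuine care rather than routine bookkeeping — is the passage from ``a sequence of bad pairs $(H_n,\xi_n)$'' to ``a single correspondence contradicting property $T$.'' One cannot simply take the direct sum with equal weights because the resulting vector need not be normalizable, and one must ensure both that the glued correspondence is again a normal $M$-$M$ bimodule and that the almost-invariant, far-from-central vector it carries genuinely witnesses the failure; the clean way is to note that property $T$ as stated is equivalent (again by a closest-point argument in $\mathcal{C}(M)$, which is exactly what \cite{CJ} establish) to the assertion that the trivial correspondence is \emph{isolated}, i.e.\ there is a uniform gap, and then the quantitative constant $C$ is essentially the reciprocal of that gap. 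Concretely I would: (i) let $\delta_0\le\epsilon_0$ be such that any $H$ with a unit vector $\xi$, $\max_i\|a_i\xi-\xi a_i\|<\delta_0$, has its central subspace $H_0\ne 0$; (ii) on such $H$ decompose $\xi=\eta_0+\zeta$ with $\eta_0\in H_0$, $\zeta\perp H_0$, and observe $a_i\zeta-\zeta a_i=a_i\xi-\xi a_i$ (since $\eta_0$ is central), so $\zeta$ is almost central and orthogonal to $H_0$; (iii) if $\|\zeta\|$ were not $o(1)$ uniformly as $\max_i\|a_i\xi-\xi a_i\|\to0$, then $\zeta/\|\zeta\|$ would be a unit vector in $H\ominus H_0$ with commutators $\le \max_i\|a_i\xi-\xi a_i\|/\|\zeta\|\to 0$, and collecting such vectors across a sequence produces (via the direct-sum-with-vanishing-weights trick, made rigorous) a correspondence violating isolation of $L^2(M)$ — the very property $T$ hypothesis — hence contradiction; (iv) quantify ``$o(1)$'' by one more contradiction argument to extract the linear bound $\|\zeta\|\le C'\max_i\|a_i\xi-\xi a_i\|$ on a small enough ball, which with $\eta:=\eta_0/\|\eta_0\|$ and $\|\xi-\eta\|\le 2\|\zeta\|$ gives the Proposition with $C=2C'$ and $\epsilon=$ the radius of that ball.
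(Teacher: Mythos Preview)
The paper does not prove this proposition: it is explicitly recalled from \cite{CJ} without argument, so there is no proof in the paper to compare yours against. You are attempting to supply what the paper outsources.

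Your strategy has a real gap at step~(iii). The subspace $H_{0}$ of central vectors is \emph{not} a sub-bimodule of $H$: for nonzero $\eta\in H_{0}$ and non-scalar $b\in M$ one has $a(b\eta)-(b\eta)a=[a,b]\eta$, and $\|[a,b]\eta\|^{2}=\|\eta\|^{2}\,\tau([a,b]^{*}[a,b])>0$ because the vector state associated to a central vector is the trace. (Concretely, in $L^{2}(M)$ itself $H_{0}=\C\Omega$, yet $b\Omega$ is not central unless $b$ is scalar.) Consequently $H\ominus H_{0}$ is not a correspondence, and your normalized vector $\zeta/\|\zeta\|$ does not live ``in a correspondence with no central vectors''; it lives in $H$ (or in $\bigoplus_{n}H_{n}$), which \emph{does} have central vectors, so property~$T$ is trivially satisfied there and yields no contradiction. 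Your appeal at that point to ``isolation of $L^{2}(M)$'' is circular: the quantitative form of that isolation is exactly the proposition you are trying to prove, and your step~(iv) inherits the same circularity.

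The correct substitute, which is what \cite{CJ} actually use, is to replace $H_{0}$ by the closed sub-\emph{bimodule} $H_{\mathrm{tr}}$ it generates. This $H_{\mathrm{tr}}$ is a multiple of $L^{2}(M)$, and $H''=H\ominus H_{\mathrm{tr}}$ is then a genuine correspondence containing no nonzero central vector; the qualitative property~$T$ applied to $H''$ gives $\|P_{H''}\xi\|\le\delta/\epsilon_{0}$ immediately. One must then still establish the linear estimate inside $H_{\mathrm{tr}}\cong L^{2}(M)\otimes K$, i.e.\ a spectral-gap bound for the commutator action on $L^{2}(M)\ominus\C\Omega$; that is where the substantive work in \cite{CJ} lies, and your outline does not address it.
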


It is proved in \cite{CJ} that a discrete I.C.C. group has property $T$ if and only if the group von Neumann algebra $\mathcal{L}(G)$ has property $T$.

\section{Property $T$ for Discrete Quantum Groups}
\begin{definition}
Let $\widehat{\G}$ be a discrete quantum group.
\begin{itemize}
\item Let $E\subset\irr$ be a finite subset, $\epsilon >0$ and $U$ a unitary representation of $\widehat{\G}$ on a Hilbert space $K$. We say that $U$ has an $(E,\epsilon)$\textit{-invariant vector} if there exists a unit vector $\xi\in K$ such that for all $x\in E$ and $\eta\in H_{x}$ we have:
$$||U^{x}\eta\ot\xi - \eta\ot\xi ||<\epsilon||\eta ||.$$
\item We say that $U$ has \textit{almost invariant vectors} if, for all finite subsets $E\subset\irr$ and all $\epsilon >0$, $U$ has an $(E,\epsilon)$-invariant vector.
\item We say that $\widehat{\G}$ has \textit{property} $T$ if every unitary representation of $\hat{\G}$ having almost invariant vectors has a non-zero invariant vector.
\end{itemize}
\end{definition}
\begin{remark}
Let $\G=(C^{*}(\Gamma),\Delta)$, where $\Gamma$ is a discrete group and $\Delta(g)=g\ot g$ for $g\in\Gamma$. It follows from the definition that $\Gh$ has property $T$ if and only if $\Gamma$ has property $T$.
\end{remark}

The next proposition will be useful to show that the dual of a free quantum group does not have property $T$.

\begin{proposition}\label{Morph}
Let $\G$ and $\mathbb{H}$ be compact quantum groups. Suppose that there is a surjective morphism of compact quantum groups from $\G$ to $\mathbb{H}$ (or an injective morphism of discrete quantum groups from $\Hh$ to $\Gh$). If $\widehat{\G}$ has property $T$ then $\widehat{\mathbb{H}}$ has property $T$.
\end{proposition}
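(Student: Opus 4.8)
The plan is to transport a unitary representation of $\Hh$ to one of $\Gh$ by pulling it back along the morphism, and then to use property $T$ of $\Gh$ to produce an invariant vector which descends. It suffices to treat the case of a surjective morphism of compact quantum groups $\pi:\G\to\Hq$, \ie{} a unital $*$-homomorphism $\pi:C(\G_{\text{max}})\to C(\Hq_{\text{max}})$ with $\Delta_{\Hq}\circ\pi=(\pi\ot\pi)\circ\Delta_{\G}$, since $\pi$ is surjective precisely when the dual morphism $\widehat\pi:\Hh\to\Gh$ is injective. Let $U$ be a unitary representation of $\Hh$ on a Hilbert space $K$ having almost invariant vectors, with associated $*$-homomorphism $\rho:C(\Hq_{\text{max}})\to\bk$, so that $(\id\ot\rho)(\mathcal{V}_{\Hq})=U$. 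Set $\widetilde U:=(\id\ot\rho\circ\pi)(\mathcal{V}_{\G})$, a unitary representation of $\Gh$ on $K$; block by block, $\widetilde U^{x}=(\id\ot\rho)\big((\id\ot\pi)(u^{x})\big)$ for $x\in\irr$.

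The first step is to decompose each block $\widetilde U^{x}$. For $x\in\irr$ the unitary $(\id\ot\pi)(u^{x})$ is a finite-dimensional unitary representation of $\Hq$ on $H_{x}$, so there is a unitary $W_{x}:H_{x}\to\bigoplus_{j}H_{v_{j}}$ with $v_{j}\in\irrH$ and $(W_{x}\ot 1)(\id\ot\pi)(u^{x})=\big(\bigoplus_{j}u^{v_{j}}\big)(W_{x}\ot 1)$, where we arrange the summands to be the representatives $u^{v_{j}}$ assembled in $\mathcal{V}_{\Hq}$; applying $\id\ot\rho$ and using $(\id\ot\rho)(u^{v})=U^{v}$ gives $(W_{x}\ot 1)\widetilde U^{x}=\big(\bigoplus_{j}U^{v_{j}}\big)(W_{x}\ot 1)$. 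Now, given a finite $E\subset\irr$ and $\epsilon>0$, let $F\subset\irrH$ be the finite set (depending only on $E$) of all irreducibles occurring in the decomposition of some $(\id\ot\pi)(u^{x})$, $x\in E$. If $\xi\in K$ is a unit vector which is $(F,\epsilon)$-invariant for $U$, then for $x\in E$ and $\eta\in H_{x}$, writing $W_{x}\eta=\sum_{j}\eta_{j}$ with $\eta_{j}\in H_{v_{j}}$ and noting that the vectors $U^{v_{j}}(\eta_{j}\ot\xi)-\eta_{j}\ot\xi$ lie in the pairwise orthogonal subspaces $H_{v_{j}}\ot K$, Pythagoras together with the unitarity of $W_{x}$ yields $\|\widetilde U^{x}(\eta\ot\xi)-\eta\ot\xi\|\le\epsilon\|\eta\|$. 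Since $E$ and $\epsilon$ were arbitrary, $\widetilde U$ has almost invariant vectors.

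Applying property $T$ of $\Gh$, $\widetilde U$ has a non-zero invariant vector $\xi\in K$, \ie{} $\widetilde U^{x}(\eta\ot\xi)=\eta\ot\xi$ for all $x\in\irr$ and $\eta\in H_{x}$. It remains to see that $\xi$ is $U$-invariant, and this is exactly where Lemma \ref{LemSurj} enters: for any $v\in\irrH$ it provides $x\in\irr$ with $v$ contained in $(\id\ot\pi)(u^{x})$, hence an isometry $S:H_{v}\to H_{x}$ with $(S\ot 1)u^{v}=(\id\ot\pi)(u^{x})(S\ot 1)$; applying $\rho$ gives $(S\ot 1)U^{v}=\widetilde U^{x}(S\ot 1)$. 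Then for $\eta\in H_{v}$ we have $(S\ot 1)U^{v}(\eta\ot\xi)=\widetilde U^{x}(S\eta\ot\xi)=S\eta\ot\xi=(S\ot 1)(\eta\ot\xi)$, and since $S\ot 1$ is isometric we conclude $U^{v}(\eta\ot\xi)=\eta\ot\xi$. As $v\in\irrH$ is arbitrary and $\xi\neq 0$, $U$ has a non-zero invariant vector; thus $\Hh$ has property $T$.

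I do not expect a genuinely deep obstacle here: the construction is a routine pullback, and the one input that is not completely formal is Lemma \ref{LemSurj}, which guarantees that every irreducible of $\Hq$ actually appears in some $(\id\ot\pi)(u^{x})$ and hence that a $\widetilde U$-invariant vector is forced to be $U$-invariant. Beyond that, the point demanding the most care is the bookkeeping in the second step: that the finite test set $F\subset\irrH$ depends only on $E$ and not on the chosen vectors, and that the block-orthogonality in the decomposition of $H_{x}$ together with the unitarity of $W_{x}$ indeed produces the uniform bound $\epsilon\|\eta\|$.
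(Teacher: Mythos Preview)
Your argument is correct and follows essentially the same route as the paper: pull back the representation along $\rho\circ\pi$, decompose each block $(\id\ot\pi)(u^{x})$ into irreducibles of $\Hq$ to propagate almost invariant vectors, and then invoke Lemma~\ref{LemSurj} to ensure that a $\widetilde U$-invariant vector is $U$-invariant. The only cosmetic difference is that the paper records multiplicities $n_{x,y}$ while you list the irreducible summands with repetitions via a single unitary $W_{x}$; your use of orthogonality and Pythagoras makes the almost-invariance estimate slightly cleaner than the paper's ``it is easy to see''.
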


\begin{proof}
We can suppose that $\G=\G_{\text{max}}$ and $\Hq=\Hq_{\text{max}}$. We will denote by a subscript $\G$ (resp. $\Hq$) the object associated to $\G$ (resp. $\Hq$). Let $\pi$ be the surjective morphism  from $C(\G)$ to $C(\Hq)$ which intertwines the comultiplications. Let $U$ be a unitary representation of $\widehat{\mathbb{H}}$ on a Hilbert space $K$ and suppose that $U$ has almost invariant vectors. Let $\rho$ be the unique morphism from $C(\Hq)$ to $\bk$ such that $(\id\otimes\rho)(\mathcal{V}_{\mathbb{H}})=U$. Consider the following unitary representation of $\widehat{\G}$ on $K$:  $V=(\id\ot(\rho\circ\pi))(\mathcal{V}_{\G})$. We will show that $V$ has almost invariant vectors. Let $E\subset \irr$ be a finite subset and $\epsilon>0$. For $x\in\irr$ and $y\in\text{Irred}(\mathbb{H})$ denote by $u^{x}\in\mathcal{B}(H_{x})\ot C(\G)$ and $v^{y}\in\mathcal{B}(H_{y})\ot C(\Hq)$ a representative of $x$ and $y$ respectively. Note that $w^{x}=(\id\ot\pi)(u^{x})$ is a finite dimensional unitary representation of $\mathbb{H}$, thus we can suppose that $w^{x}=\oplus\, n_{x,y}v^{y}$. Let $L=\{y\in\text{Irred}(\mathbb{H})\,|\,\exists x\in E,\,n_{x,y}\neq 0\}$. Because $U$ has almost invariant vectors, there exists a norm one vector $\xi\in K$ such that $||U^{y}\eta\ot\xi-\eta\ot\xi||<\epsilon||\eta||$ for all $y\in L$ and all $\eta\in H_{y}$. Using the isomorphism
$$H_{x}=\bigoplus_{y\in\irrH ,\,\, n_{x,y}\neq 0}\underbrace{(H_{y}\oplus\ldots\oplus H_{y})}_{n_{x,y}},$$
we can identify $V^{x}$ with $\oplus\, n_{x,y} U^{y}$ in $\bigoplus_{y}\mathcal{B}(H_{y})\oplus\mathcal{B}(H_{y})\oplus\ldots\oplus \mathcal{B}(H_{y})\ot\mathcal{B}(K)$. With this identification it is easy to see that, for all $x\in E$ and all $\eta$ in $H_{x}$, we have $||V^{x}\eta\ot\xi-\eta\ot\xi||<\epsilon||\eta||$. It follows that $V$ has almost invariant vectors and thus there is a non-zero $V$-invariant vector, say $l$, in $K$. To show that $l$ is also $U$-invariant it is sufficient to show that for every $y\in\irrH$ there exists $x\in\irr$ such that $n_{x,y}\neq 0$. This follows from Lemma \ref{LemSurj}.
\end{proof}

\begin{corollary}
The discrete quantum groups $\widehat{A_{o}(n)}$, $\widehat{A_{u}(n)}$ and $\widehat{A_{s}(n)}$ do not have property $T$ for $n\geq 2$.
\end{corollary}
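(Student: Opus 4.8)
The strategy is to reduce everything to Proposition~\ref{Morph} together with the Remark following the definition of property~$T$. By that Remark, for any discrete group $\Gamma$ the discrete quantum group dual to $(C^*(\Gamma),\de)$ has property~$T$ if and only if $\Gamma$ does; and by the contrapositive of Proposition~\ref{Morph}, if a compact quantum group $\G$ admits a surjective morphism onto $\Hq$ and $\Hh$ has no property~$T$, then $\Gh$ has no property~$T$. Hence it is enough to produce, for each of the three families, a surjective morphism of compact quantum groups onto $(C^*(\Gamma),\de)$ for some discrete group $\Gamma$ which is classically known to lack property~$T$.

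For $A_u(n)$ with $n\ge2$ take $\Gamma=\Z^{*n}\cong F_n$ with canonical unitary generators $g_1,\dots,g_n\in C^*(\Gamma)$: the matrix $U=\mathrm{diag}(g_1,\dots,g_n)$ is unitary, has unitary conjugate $\overline U=\mathrm{diag}(g_1^*,\dots,g_n^*)$, and is a corepresentation of $(C^*(\Gamma),\de)$ because $\de(g_i)=g_i\ot g_i$; by the universal property of $A_u(n)$ this gives a surjective morphism $A_u(n)\to(C^*(F_n),\de)$, and $F_n$ has no property~$T$ for $n\ge2$. The case of $A_o(n)$, $n\ge2$, is identical with $\Gamma=\Z_2^{*n}$: its canonical generators $s_1,\dots,s_n$ are self-adjoint unitaries, so $\mathrm{diag}(s_1,\dots,s_n)$ is an orthogonal corepresentation of $(C^*(\Gamma),\de)$, yielding a surjective morphism $A_o(n)\to(C^*(\Z_2^{*n}),\de)$; and $\Z_2^{*n}$, being a nontrivial free product, does not have property FA, hence does not have property~$T$ (for $n=2$ one may instead note that $\Z_2*\Z_2=D_\infty$ is infinite and amenable).

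The case of $A_s(n)$ is the delicate one, and it is the main obstacle: its abelianization $C(S_n)$ is finite, so no useful diagonal or abelian quotient exists, and in fact for $n\le3$ the quantum group $A_s(n)$ is the finite classical group $S_n$, whose dual trivially has property~$T$, so the statement genuinely concerns $n\ge4$. For $n\ge4$ I would use the size-$n$ block-diagonal magic unitary built from two $2\times2$ blocks $\bigl(\begin{smallmatrix}p&1-p\\1-p&p\end{smallmatrix}\bigr)$ and $\bigl(\begin{smallmatrix}q&1-q\\1-q&q\end{smallmatrix}\bigr)$ together with an $(n-4)\times(n-4)$ identity block, where $p=\tfrac12(1+s)$ and $q=\tfrac12(1+t)$ for $s,t$ the canonical self-adjoint unitary generators of $D_\infty=\Z_2*\Z_2$. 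A short computation shows this matrix is a magic unitary and, using $\de(s)=s\ot s$ and $\de(t)=t\ot t$, a corepresentation of $(C^*(D_\infty),\de)$; the universal property of $A_s(n)$ then yields a surjective morphism $A_s(n)\to(C^*(D_\infty),\de)$ (equivalently one invokes Wang's free-product decomposition $A_s(n)\twoheadrightarrow A_s(2)*A_s(2)\cong(C^*(D_\infty),\de)$). Since $D_\infty$ is infinite and amenable it has no property~$T$, and Proposition~\ref{Morph} concludes.
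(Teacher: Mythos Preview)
Your argument is correct and follows the same route as the paper: produce surjective morphisms of compact quantum groups onto $(C^*(\Gamma),\de)$ for discrete groups $\Gamma$ without property~$T$ and invoke Proposition~\ref{Morph}. The paper uses the identical targets $\mathbb{F}_n$ and $\star_{i=1}^{n}\Z_2$ for $A_u(n)$ and $A_o(n)$, and for $A_s(n)$ it records the family of quotients $C^*(\star_i\Z_{n_i})$ with $\sum n_i=n$, of which your explicit block-diagonal construction onto $C^*(D_\infty)=C^*(\Z_2*\Z_2)$ is the instance $n_1=n_2=2$. Your remark that the claim for $A_s(n)$ genuinely requires $n\ge 4$---since $A_s(n)=C(S_n)$ is finite-dimensional for $n\le 3$, so its dual \emph{does} have property~$T$ by the later Proposition on finite-dimensional discrete quantum groups---is correct and sharpens the paper's stated range.
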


\begin{proof}
It follows directly from the preceding proposition and the following surjective morphisms:
$$A_{o}(n)\rightarrow C^{*}(\star_{i=1}^{n}\mathbb{Z}_{2}),\,\,A_{u}(n)\rightarrow C^{*}(\mathbb{F}_{n}),\,\,
A_{s}(n)\rightarrow C^{*}(\star_{i=1}^{n}\mathbb{Z}_{n_{i}}),$$
where $\sum n_{i}=n$.
\end{proof}

In the next Proposition we show that discrete quantum groups with property $T$ are unimodular.

\begin{proposition}\label{trace}
Let $\widehat{\G}$ be a discrete quantum group. If $\widehat{\G}$ has property $T$ then it is a Kac algebra, \ie{} the Haar state $\varphi$ on $\G$ is a trace.
\end{proposition}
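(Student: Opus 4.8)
The plan is to show the equivalent statement that a non-unimodular discrete quantum group cannot have property $T$. Recall (from the facts recalled above) that $\Gh$ is unimodular if and only if the modular element $\hat\delta=\sum_{x\in\irr}Q_x^{-2}p_x$ equals $1$, which is equivalent to $Q_x=\id$ for all $x\in\irr$, \ie{} to $\varphi$ being a trace; so I assume $\hat\delta\neq1$. The crucial observation is that, since $\hat\delta$ is affiliated with $\coGh$ and $\hat\Delta(\hat\delta)=\hat\delta\ot\hat\delta$, for every $t\in\R$ the unitary $\hat\delta^{it}\in M(\coGh)=\linfG$ satisfies $\hat\Delta(\hat\delta^{it})=\hat\delta^{it}\ot\hat\delta^{it}$, which in the von Neumann setting means exactly that $\hat\delta^{it}$ is a one-dimensional unitary representation of $\Gh$ (on $\C$), with $(\hat\delta^{it})^x=\hat\delta^{it}p_x=Q_x^{-2it}\in\mathcal{B}(H_x)$.

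I would then assemble these into a single bad representation. First note that, since $\hat\delta\neq1$, the subgroup $N=\{t\in\R\,|\,\hat\delta^{it}=1\}$ of $(\R,+)$ has $0$ as an isolated point: for each $x$ the matrix $Q_x^{-2}$ has finitely many eigenvalues, so $\{t\,|\,Q_x^{-2it}=\id_{H_x}\}$ is either all of $\R$ or a discrete subgroup of $\R$, and as some $Q_{x_0}\neq\id$ we conclude that $N$ is discrete, hence $N=\{0\}$ or $N=c\Z$ for some $c>0$. (Equivalently, by Proposition~\ref{Propeigenvalues} the group $\Gamma$ is non-trivial, so $\log\Gamma$ is either infinite cyclic or dense in $\R$, which again forces $0$ to be isolated in $N$.) Choose $\delta_0>0$ with $N\cap(-\delta_0,\delta_0)=\{0\}$ and a sequence $(t_n)_{n\ge1}$ with $0<|t_n|<\delta_0$ and $t_n\to0$, and set $U=\bigoplus_{n\ge1}\hat\delta^{it_n}$, a unitary representation of $\Gh$ on $l^{2}(\N)=\bigoplus_n\C$, with $U^x=\bigoplus_n Q_x^{-2it_n}\in\mathcal{B}(H_x)\ot\mathcal{B}(l^{2}(\N))$.

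Next I would check that $U$ has almost invariant vectors but no non-zero invariant vector. For the first: given a finite $E\subset\irr$ and $\epsilon>0$, each $Q_x$ with $x\in E$ is a fixed positive invertible matrix, so $\|Q_x^{-2it}-\id_{H_x}\|\to0$ as $t\to0$; picking $N_0$ with $\|Q_x^{-2it_{N_0}}-\id_{H_x}\|<\epsilon$ for all $x\in E$ and letting $\xi$ be the $N_0$-th basis vector of $l^{2}(\N)$, we get $\|U^x\eta\ot\xi-\eta\ot\xi\|=\|Q_x^{-2it_{N_0}}\eta-\eta\|<\epsilon\|\eta\|$ for all $x\in E$ and $\eta\in H_x$, so $\xi$ is $(E,\epsilon)$-invariant. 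For the second: if $\xi=\sum_n\xi_n e_n\neq0$ were $U$-invariant, comparing $n$-th components in $U^x(\eta\ot\xi)=\eta\ot\xi$ gives $\xi_n(Q_x^{-2it_n}-\id_{H_x})\eta=0$ for all $n$, $x$ and $\eta\in H_x$; choosing $m$ with $\xi_m\neq0$ forces $Q_x^{-2it_m}=\id_{H_x}$ for all $x$, \ie{} $t_m\in N$, contradicting $0<|t_m|<\delta_0$. Hence $U$ witnesses the failure of property $T$, a contradiction; therefore $\hat\delta=1$, so $Q_x=\id$ for all $x$, and $\varphi$ is a trace.

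The only point that needs real care is showing that $0$ is isolated in $N$ --- this is what lets us choose the $t_n$ so that each $\hat\delta^{it_n}$ is a genuinely non-trivial one-dimensional representation (hence one without invariant vectors), and it is exactly where the structure of the eigenvalues of the $Q_x$ (Proposition~\ref{Propeigenvalues}, or merely their finiteness for each fixed $x$) enters. Everything else is a direct unwinding of the definitions of unitary representation and of almost invariant vectors.
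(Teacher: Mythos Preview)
Your argument is correct. Both proofs rest on the same ingredient --- the group-like modular element $\hat\delta$ with $\hat\Delta(\hat\delta)=\hat\delta\otimes\hat\delta$ --- but they package it differently. The paper observes that functional calculus $f\mapsto f(\hat\delta)$ gives an injective morphism of discrete quantum groups from the eigenvalue group $\Gamma\subset\R^{*}_{+}$ (Proposition~\ref{Propeigenvalues}) into $\Gh$, then appeals to the hereditary Proposition~\ref{Morph}: if $\Gh$ has property $T$ then so does $\Gamma$, forcing $\Gamma=\{1\}$ and hence $\hat\delta=1$. You instead work on the dual side, using the one-parameter family of one-dimensional representations $t\mapsto\hat\delta^{it}$ to build an explicit representation $\bigoplus_{n}\hat\delta^{it_{n}}$ with almost invariant vectors and no invariant vector. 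Your construction is precisely what one would obtain by composing the paper's morphism with the standard witness that a non-trivial subgroup of $\R^{*}_{+}$ fails property $T$, and then unfolding. The trade-off: the paper's proof is a two-line reduction once Proposition~\ref{Morph} is available, while yours is self-contained and makes the failure of property $T$ concretely visible without invoking any black box. Your care in checking that $0$ is isolated in $N$ (so that genuinely non-trivial $\hat\delta^{it_{n}}$ exist arbitrarily close to $1$) is exactly the discrete-spectrum content that the paper encodes in Proposition~\ref{Propeigenvalues}.
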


\begin{proof}
 Suppose $\Gh$ has property $T$ and let $\Gamma$ be the discrete group introduced in Proposition \ref{Propeigenvalues}. Because Sp$(\widehat{\delta})=\Gamma\cup\{0\}$ and  $\hat{\Delta}{\widehat{\delta}}=\widehat{\delta}\ot\widehat{\delta}$, we have an injective *-homomorphism
$$\alpha\,:\,c_{0}(\Gamma)\rightarrow \coGh,\quad\alpha(f)=f(\widehat{\delta})$$
satisfying $\Delta\circ\alpha=(\alpha\ot\alpha)\circ\Delta_{\Gamma}$. By Proposition \ref{Morph}, $\Gamma$ has property $T$. It follows that $\Gamma=\{1\}$ and $\widehat{\delta}=1$. Thus $Q_{x}=1$ for all $x\in\irr$. This means that $\varphi$ is a trace.
\end{proof}

\begin{proposition}\label{Propfinitelygenerated}
Let $\widehat{\G}$ be a discrete quantum group. If $\widehat{\G}$ has property $T$ then it is finitely generated.
\end{proposition}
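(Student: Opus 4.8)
The plan is to imitate the classical argument that a discrete group with property $T$ is finitely generated: sum the quasi-regular representations over all finitely generated subgroups, use property $T$ to extract a non-zero invariant vector living in one summand, and conclude that the corresponding subgroup has finite index, which in turn forces $\Gh$ to be finitely generated.

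First I would invoke Proposition \ref{trace}: property $T$ makes $\Gh$ unimodular, so Lemma \ref{Hinvariant} is available. For every finite subset $E\subset\irr$ let $\Hh_{E}$ be the discrete quantum subgroup of $\Gh$ generated by $E$, i.e. the one whose representation category has for objects the sub-representations of finite tensor products of elements of $E\cup\bar{E}$; thus $\text{Irred}(\Hq_{E})$ is the smallest subset of $\irr$ containing $1$ and $E$ and stable under fusion and conjugation. Let $U_{E}$ be the quasi-regular representation of $\Gh$ modulo $\Hh_{E}$ on $l^{2}(\Gh/\Hh_{E})$, and set $U=\bigoplus_{E}U_{E}$, where $E$ ranges over all finite subsets of $\irr$; being a direct sum of unitary representations, $U$ is a unitary representation of $\Gh$.

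Next I would show that $U$ has almost invariant vectors. Given a finite set $E_{0}\subset\irr$ and $\epsilon>0$, apply Lemma \ref{Hinvariant} to $\Hh_{E_{0}}\subset\Gh$ (legitimate since $\Gh$ is unimodular): this gives a unit vector $\xi_{0}\in l^{2}(\Gh/\Hh_{E_{0}})$ with $U_{E_{0}}^{x}\eta\ot\xi_{0}=\eta\ot\xi_{0}$ for all $x\in\text{Irred}(\Hq_{E_{0}})$, in particular for all $x\in E_{0}$; regarded as a vector in the $E_{0}$-summand of $U$, this $\xi_{0}$ is $(E_{0},\epsilon)$-invariant. Hence $U$ has almost invariant vectors, so by property $T$ it has a non-zero invariant vector. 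Since $U$ is a direct sum, one of its components, lying in $l^{2}(\Gh/\Hh_{E_{1}})$ for some finite $E_{1}$, is a non-zero $U_{E_{1}}$-invariant vector, and Lemma \ref{Invariant} then forces $\irr/\text{Irred}(\Hq_{E_{1}})$ to be finite; fix representatives $x_{1},\dots,x_{n}$.

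Finally I would unwind the fusion combinatorics. Put $F=E_{1}\cup\bar{E}_{1}\cup\{x_{1},\dots,x_{n}\}$, a finite subset of $\irr$. Any $x\in\irr$ is equivalent (for the relation defining $\irr/\text{Irred}(\Hq_{E_{1}})$) to some $x_{j}$, i.e. $x\subset x_{j}\ot t$ with $t\in\text{Irred}(\Hq_{E_{1}})$; as $t$ is a sub-representation of a tensor product of elements of $E_{1}\cup\bar{E}_{1}$, the representation $x$ is a sub-representation of a tensor product of elements of $F$. Decomposing an arbitrary finite-dimensional representation $r$ into irreducibles and choosing, for each irreducible component, an isometric intertwiner into such a product of elements of $F$ assembles the family of morphisms required by the definition of finite generation; thus $\mathcal{R}(\G)$ is generated by $F$ and $\Gh$ is finitely generated. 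The one point that is not purely formal is this last paragraph — the quantum version of ``a finite-index subgroup of a finitely generated group is finitely generated'', read in reverse; everything else is a direct assembly of Proposition \ref{trace} and Lemmas \ref{Hinvariant} and \ref{Invariant}.
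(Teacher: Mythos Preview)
Your argument is correct and follows essentially the same strategy as the paper's proof: form the direct sum of quasi-regular representations modulo finitely generated quantum subgroups, exhibit almost invariant vectors via Lemma~\ref{Hinvariant}, apply property $T$, and use Lemma~\ref{Invariant} on the summand supporting a non-zero invariant vector. The only cosmetic differences are that the paper indexes by an increasing enumeration $\{x_0,\dots,x_i\}$ of $\irr$ rather than by all finite subsets, and that you make the appeal to Proposition~\ref{trace} (unimodularity, needed for Lemma~\ref{Hinvariant}) explicit where the paper leaves it implicit.
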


\begin{proof}
Let $\irr=\{x_{n}\,|\,n\in\N\}$ and $\mathcal{C}$ be the category of finite dimensional unitary representations of $\G$. For $i\in\N$ let $\mathcal{D}_{i}$ be the full subcategory of $\mathcal{C}$ generated by $(x_{0},\ldots,x_{i})$. This means that the irreducibles of $\mathcal{D}_{i}$ are the irreducible representations $u$ of $\G$ such that $u$ is equivalent to a sub-representation of $x_{k_{1}}^{\epsilon_{1}}\ot\ldots\ot x_{k_{l}}^{\epsilon_{l}}$ for $l\geq 1$, $0\leq k_{j}\leq n$, and $\epsilon_{j}$ is nothing or the contragredient. The Hilbert spaces and the morphisms are the same in $\mathcal{D}_{i}$ or in $\mathcal{D}$. Thus we have $1_{\mathcal{C}}\in\mathcal{D}_{i}$, $\mathcal{D}_{i}\ot\mathcal{D}_{i}\subset\mathcal{D}_{i}$ and $\overline{\mathcal{D}_{i}}=\mathcal{D}_{i}$. Let $\Hq_{i}$ be the compact quantum group such that $\mathcal{D}_{i}$ is the category of representation of $\Hq_{i}$. Let $U_{i}\in l^{\infty}(\Gh/\Hh_{i})\ot\mathcal{B}(l^{2}(\Gh/\Hh_{i}))$ be the quasi-regular representation of $\Gh$ modulo $\Hh_{i}$. Let $U$ be the direct sum of the $U_{i}$; this a unitary representation on $K=\bigoplus l^{2}(\Gh/\Hh_{i})$. Let us show that $U$ has almost invariant vectors. Let $E\subset\irr$ be a finite subset. There exists $i_{0}$ such that $E\subset\irrHi$ for all $i\geq i_{0}$. By Lemma \ref{Hinvariant} we have a unit vector $\xi$ in $l^{2}(\Gh/\Hh_{i})$ such that $U_{i_{0}}^{x}\eta\ot\xi=\eta\ot\xi$ for all $x\in E$ and all $\eta\in H_{x}$. Let $\tilde{\xi}=(\xi_{i})\in K$ where $\xi_{i}=0$ if $i\neq  i_{0}$ and $\xi_{i_{0}}=\xi$. Then $\tilde{\xi}$ is a unit vector in $K$ such that $U^{x}\eta\ot\tilde{\xi}=\eta\ot\tilde{\xi}$ for all $x\in E$. It follows that $U$ has an almost invariant vector. By property $T$ there exists a non-zero invariant vector $l=(l_{i})\in K$. There exists $m$ such that $l_{m}\neq 0$. Then $l_{m}$ is an invariant vector for $U_{m}$. By Lemma \ref{Invariant}, $\irr/\text{Irred}(\Hq_{m})$ is a finite set. Let $y_{1},\ldots,y_{l}$ be a complete set of representatives of $\irr/\text{Irred}(\Hq_{m})$. Then $\mathcal{C}$ is generated by $\{y_{1},\ldots,y_{l},x_{0},\ldots,x_{m},\bar{x}_{0},\ldots,\bar{x}_{m}\}$.
\end{proof}

As in the classical case, we can show that property $T$ is equivalent to the existence of a Kazhdan pair.

\begin{proposition}\label{pair}
Let $\widehat{\G}$ be a finitely generated discrete quantum group. Let $E\subset\irr$ be a finite subset with $1\in E$ such that $\mathcal{R}(\G)$ is generated by $E$. The following assertions are equivalent:
\begin{enumerate}
\item $\widehat{\G}$ has property $T$.
\item There exists $\epsilon >0$ such that every unitary representation of $\widehat{\G}$ having an $(E,\epsilon)$-invariant vector has a non-zero invariant vector.
\end{enumerate}
\end{proposition}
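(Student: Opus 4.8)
The plan is to treat the easy implication first and then spend the effort on the converse. For (2) $\Rightarrow$ (1): if a unitary representation $U$ of $\widehat{\G}$ has almost invariant vectors, then, applying the definition with the finite set $E$ and the particular $\epsilon$ furnished by (2), $U$ has an $(E,\epsilon)$-invariant vector, hence a non-zero invariant vector; so $\widehat{\G}$ has property $T$. The substance is (1) $\Rightarrow$ (2), which I would prove by contradiction in the usual Kazhdan style. Assume (2) fails. Then for each $n\geq 1$ there is a unitary representation $U_{n}$ on a Hilbert space $K_{n}$ admitting an $(E,1/n)$-invariant unit vector $\xi_{n}$ but having no non-zero invariant vector. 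Form $U=\bigoplus_{n}U_{n}$ on $K=\bigoplus_{n}K_{n}$ (concretely, if $U_{n}=(\id\otimes\rho_{n})(\mathcal V)$ then $U=(\id\otimes(\bigoplus_n\rho_{n}))(\mathcal V)$, which is again a unitary representation, and each $U^{x}=\bigoplus_{n}U_{n}^{x}$ acts block-diagonally). I will show $U$ has almost invariant vectors; property $T$ then yields a non-zero invariant vector $l=(l_{n})_{n}$, and any non-zero component $l_{m}$ is a non-zero $U_{m}$-invariant vector, a contradiction.

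The technical heart is the passage from $(E,\epsilon)$-invariance to $(F,\delta)$-invariance for an arbitrary finite $F\subset\irr$, and this is exactly where the hypothesis that $E$ generates $\mathcal{R}(\G)$ enters. Fixing a unitary representation $U=(\id\otimes\rho)(\mathcal V)$ and writing $U^{r}=(\id\otimes\rho)(u^{r})$ for a finite-dimensional representation $r$ of $\G$ (so that $U^{y\otimes z}=U^{y}_{13}U^{z}_{23}$ and $(S\otimes 1)U^{x}=U^{r}(S\otimes 1)$ for $S\in\text{Mor}(x,r)$), I would isolate three stability estimates for a unit vector $\xi\in K$. Tensor products: if $\xi$ is $(\{y\},\epsilon_{1})$- and $(\{z\},\epsilon_{2})$-invariant then it is $(\{y\otimes z\},\epsilon_{1}+\epsilon_{2})$-invariant, obtained by splitting $(U^{y\otimes z}-1)(v\otimes\xi)=U^{y}_{13}(U^{z}_{23}-1)(v\otimes\xi)+(U^{y}_{13}-1)(v\otimes\xi)$ and estimating each summand after expanding a general $v\in H_{y}\otimes H_{z}$ along an orthonormal basis of the spectator leg and applying the Pythagorean identity. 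Direct sums: if $\xi$ is $(\{r_{k}\},\epsilon_{k})$-invariant for each $k$ in a finite family then it is $(\{\bigoplus_{k}r_{k}\},\max_{k}\epsilon_{k})$-invariant, again by Pythagoras over the diagonal blocks. Subobjects: if $S\in\text{Mor}(x,r)$ is an isometry and $\xi$ is $(\{r\},\epsilon)$-invariant then $\xi$ is $(\{x\},\epsilon)$-invariant, since $(U^{x}-1)(\eta\otimes\xi)=(S^{*}\otimes 1)(U^{r}-1)(S\eta\otimes\xi)$ and $\|S\|=1$.

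Now applying the definition of ``$\mathcal{R}(\G)$ generated by $E$'' to each irreducible $x\in\irr$ gives morphisms $b_{k}\in\text{Mor}(r_{k},x)$ with $\sum_{k}b_{k}b_{k}^{*}=\id_{H_{x}}$, where each $r_{k}$ is a tensor product of, say, $l_{k}$ elements of $E$; the map $\eta\mapsto(b_{k}^{*}\eta)_{k}$ is then an isometric intertwiner embedding $x$ into $\bigoplus_{k}r_{k}$. Combining the three estimates, any $(E,\epsilon)$-invariant unit vector is $(\{x\},N_{x}\epsilon)$-invariant with $N_{x}=\max_{k}l_{k}$, hence $(F,N_{F}\epsilon)$-invariant for every finite $F\subset\irr$, where $N_{F}=\max_{x\in F}N_{x}$ depends only on $F$ and $E$. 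This makes the direct-sum argument go through: given finite $F$ and $\delta>0$, pick $n$ with $N_{F}/n<\delta$; since $\xi_{n}$ is $(E,1/n)$-invariant for $U_{n}$ it is $(F,\delta)$-invariant for $U_{n}$, and viewing it as the vector of $K$ supported in the $n$-th summand, block-diagonality of the $U^{x}$ shows it is $(F,\delta)$-invariant for $U$. So $U$ has almost invariant vectors, and the contradiction with property $T$ is reached as described above.

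I expect the main obstacle to be the tensor-product estimate, specifically obtaining the clean additive bound $\epsilon_{1}+\epsilon_{2}$: a naive application of the triangle inequality to elementary tensors does not suffice, and one must handle a general $v\in H_{y}\otimes H_{z}$ by expanding it along an orthonormal basis of the correct leg so that the cross terms vanish. Once this stability package is in place, everything else---the translation of ``$E$ generates $\mathcal{R}(\G)$'' into uniformly bounded word lengths, the behaviour of invariant and almost invariant vectors under direct sums, and the trivial direction (2) $\Rightarrow$ (1)---is routine.
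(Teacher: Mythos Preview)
Your proof is correct and follows the same overall architecture as the paper's: negate (2), choose for each $n$ a representation $U_n$ with an $(E,\epsilon_n)$-invariant unit vector but no invariant vector, show that $\bigoplus_n U_n$ has almost invariant vectors, and derive a contradiction from property $T$ via a non-zero component of the resulting invariant vector. The difference lies entirely in the tensor-product estimate. The paper embeds $y\in E_n$ isometrically into $x_1\otimes\cdots\otimes x_n$, expands $t_y\eta$ in the full tensor orthonormal basis of $H_{x_1}\otimes\cdots\otimes H_{x_n}$, and applies the triangle inequality termwise; this introduces an $\ell^1$--$\ell^2$ loss $\|t_y\eta\|_1\le\sqrt{N^n}\,\|\eta\|$ (with $N=\max_{x\in E}n_x$) and forces the exponentially decaying choice $\epsilon_n=1/(n^2\sqrt{N^n})$. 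Your Pythagorean argument---expanding only along the spectator leg so that the summands are orthogonal---gives the clean additive bound $\epsilon_1+\epsilon_2$, so the constant for a length-$l$ word is just $l$ and $\epsilon_n=1/n$ already suffices. Your three-lemma stability package (tensor, direct sum, subobject) is a tidier and sharper presentation of what the paper does in one computation; the paper's cruder bound is of course still enough for the qualitative conclusion.
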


\begin{proof}
It is sufficient to show that $1$ implies $2$. Let $n\in\N^{*}$ and $E_{n}=\{ y\in\irr\,|\,y\subset x_{1}\ldots x_{n},\,\,x_{i}\in E\}$. Because $1\in E$, the sequence $(E_{n})_{n\in\N^{*}}$ is increasing. Let us show that $\irr=\bigcup E_{n}$. Let $r\in\irr$. Because $\mathcal{R}(\G)$ is generated by $E$, there exists a finite family of morphisms $b_{k}\in\text{Mor}(r_{k},r)$, where $r_{k}$ is a product of elements of $E$ and $\sum_{k} b_{k}b_{k}^{*}=I_{r}$. Let $L$ be the maximum of the length of the elements $r_{k}$. Because $1\in E$, we can suppose that all the $r_{k}$ are of the form $x_{1}\ldots x_{L}$ with $x_{i}\in E$. Put $t_{k}=b_{k}^{*}$. Note that $t_{k}^{*}t_{k}\in\text{Mor}(r,r)$. Because $r$ is irreducible and $\sum_{k}t_{k}^{*}t_{k}=I_{r}$, there exists a unique $k$ such that $t_{k}^{*}t_{k}=I_{r}$ and $t_{l}^{*}t_{l}=0$ if $l\neq k$. Thus $t_{k}\in\text{Mor}(r,r_{k})$ is an isometry. This means that $r\subset r_{k}=x_{1}\ldots x_{L}$, \ie{} $r\in E_{L}$.

Suppose that $\widehat{\G}$ has property $T$ and $2$ is false. Let $N=\text{Max}\{n_{x}\,|\,x\in E\}$ and  $\epsilon_{n}=\frac{1}{n^{2}\sqrt{N^{n}}}$. For all $n\in \N^{*}$ there exists a unitary representation $U_{n}$ of $\widehat{\G}$ on a Hilbert space $K_{n}$ with an $(E,\epsilon_{n})$-invariant vector but without a non-zero invariant vector. Let $\xi_{n}$ be a unit vector in $K_{n}$ which is $(E,\epsilon_{n})$-invariant. Write $U_{n}=\sum_{y\in\irr} U^{n,y}$ where $U^{n,y}$ is a unitary element in $\mathcal{B}(H_{y})\otimes\mathcal{B}(K_{n})$. Let us show the following:
\begin{equation}\label{Eq1}
||U^{n,y}\eta\otimes\xi_{n}-\eta\otimes\xi_{n}||_{H_{y}\otimes K_{n}}<\frac{1}{n}||\eta||_{H_{y}},\,\,\forall n\in\N^{*},\,\forall y\in E_{n},\,\forall\eta\in H_{y}.
\end{equation}
Let $y\in E_{n}$ and $t_{y}\in\text{Mor}(y,x_{1}\ldots x_{n})$ such that $t_{y}^{*}t_{y}=I_{y}$. Note that, by the definition of a representation and using the description of the coproduct on $\widehat{\G}$, we have  $(t_{y}\ot 1) U^{n,y}=U^{n,x_{1}}_{1,n+1}U^{n,x_{2}}_{2,n+1}\ldots U^{n,x_{n}}_{n,n+1}(t_{y}\ot 1)$ where the subscripts are used for the leg numbering notation. It follows that, for all $\eta\in H_{y}$, we have:

\begin{eqnarray*}
||U^{n,y}\eta\otimes\xi_{n}-\eta\otimes\xi_{n}||
&=&||(t_{y}\ot 1)U^{n,y}\eta\otimes\xi_{n}-(t_{y}\ot 1)\eta\otimes\xi_{n}||\\
 &=&||U^{n,x_{1}}_{1,n+1}U^{n,x_{2}}_{2,n+1}\ldots U^{n,x_{n}}_{n,n+1}t_{y}\eta\otimes\xi_{n}-t_{y}\eta\otimes\xi_{n}||\\
 &\leq &\sum_{k=1}^{n}||U^{n,x_{k}}_{k,n+1}t_{y}\eta\otimes\xi_{n}-t_{y}\eta\otimes\xi_{n}||.
\end{eqnarray*}

Let $(e^{x_{i}}_{j})_{1\leq j \leq n_{x_{i}}}$ be an orthonormal basis of $H_{x_{i}}$ and put $$t_{y}\eta=\sum\lambda_{i_{1}\ldots i_{n}}e^{x_{1}}_{i_{1}}\ot\ldots\ot e^{x_{n}}_{i_{n}}.$$ Then we have, for all $y\in E_{n}$ and $\eta\in H_{y}$,

\begin{eqnarray*}
||U^{n,y}\eta\otimes\xi_{n}-\eta\otimes\xi_{n}||
&\leq &\sum_{k}||\sum_{{i}_{1}\ldots i_{n}}\lambda_{i_{1}\ldots i_{n}}(U^{n,x_{k}}_{k,n+1}e^{x_{1}}_{i_{1}}\ot\ldots\ot e^{x_{n}}_{i_{n}}\ot\xi_{n}-e^{x_{1}}_{i_{1}}\ot\ldots\ot e^{x_{n}}_{i_{n}}\ot\xi_{n})||\\
&\leq &\sum_{k}\sum_{{i}_{1}\ldots i_{n}}|\lambda_{{i}_{1}\ldots i_{n}}|||U^{n,x_{k}}e^{x_{k}}_{i_{k}}\ot\xi_{n}-e^{x_{k}}_{i_{k}}\ot\xi_{n}||\\
&\leq & n\epsilon_{n}||t_{y}\eta||_{1},
\end{eqnarray*}

where $||t_{y}\eta||_{1}=\sum|\lambda_{{i}_{1}\ldots i_{n}}|$. Note that $||t_{y}\eta||_{1}\leq \sqrt{N^{n}}||\eta||$, thus we have

\begin{eqnarray*}
||U^{n,y}\eta\otimes\xi_{n}-\eta\otimes\xi_{n}||
&\leq &n\epsilon_{n}\sqrt{N^{n}}||\eta||\\
&\leq & \frac{1}{n}||\eta||.
\end{eqnarray*}

This proves Eq. $(\ref{Eq1})$. It is now easy to finish the proof. Let $U$ be the direct sum of the $U_{n}$. It is a unitary representation of $\widehat{\G}$ on $K=\bigoplus K_{n}$. Let $\delta>0$ and $L\subset\irr$ a finite subset. Because $\irr=\bigcup^{\uparrow} E_{n}$ there exists $n_{1}$ such that $L\subset E_{n}$ for all $n\geq n_{1}$. Choose $n\geq n_{1}$ such that $\frac{1}{n}<\delta$. Put $\xi=(0,\ldots,0,\xi_{n},0,\ldots)$ where $\xi_{n}$ appears in the n-th place. Let $x\in L$ and $\eta\in H_{x}$. We have:

\begin{eqnarray*}
||U^{x}\eta\otimes\xi-\eta\ot\xi||
&=&||U^{n,x}\eta\otimes\xi_{n}-\eta\ot\xi_{n}||\\
&\leq &\frac{1}{n}||\eta||<\delta||\eta||.
\end{eqnarray*}

Thus $U$ has almost invariant vectors. It follows from property $T$ that $U$ has a non-zero invariant vector, say $l=(l_{n})$. There is a $n$ such that $l_{n}\neq 0$ and from the U-invariance of $l$ we conclude that $l_{n}$ is $U_{n}$-invariant. This is a contradiction.
\end{proof}

Such a pair $(E,\epsilon)$ as defined Proposition \ref{pair} is called a \textit{Kazhdan pair} for $\widehat{\G}$. Let us give an obvious example of a Kazhdan pair.

\begin{proposition}
Let $\Gh$ be a finite-dimensional discrete quantum group. Then $(\irr,\sqrt{2})$ is a Kazhdan pair for $\Gh$.
\end{proposition}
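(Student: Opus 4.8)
The plan is to show directly that the pair $(\irr,\sqrt 2)$ satisfies condition (2) of Proposition \ref{pair}, i.e.\ that any unitary representation $U$ of $\Gh$ with an $(\irr,\sqrt2)$-invariant vector already has a non-zero invariant vector. (Since $\Gh$ is finite-dimensional, $\irr$ is a finite set and generates $\mathcal{R}(\G)$, so Proposition \ref{pair} applies.) First I would recall what an invariant vector means: a unit vector $\xi\in K$ is invariant for $U$ iff $U^{x}\eta\ot\xi=\eta\ot\xi$ for all $x\in\irr$ and $\eta\in H_{x}$; equivalently, writing $\rho\,:\,C(\G_{\max})\to\mathcal{B}(K)$ for the $*$-homomorphism with $(\id\ot\rho)(\mathcal{V})=U$, the vector $\xi$ is invariant iff it is fixed by $\rho$ applied to all coefficients, i.e.\ $\rho\circ h$ acts as the identity on $\xi$ where $h$ is the Haar state viewed via the (finite-dimensional, hence $C^*$-algebraic) averaging. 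Concretely, the key object is the "averaging projection'': the element $z=(\id\ot h)(\mathcal{V})\in\mathcal{B}(K)$? — more precisely $P=(\id\ot h)(U)$ acting appropriately. Let me instead use the cleaner route below.

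The cleaner route: because $\Gh$ is finite-dimensional, $\irr$ is finite; set $n=\dim l^{\infty}(\Gh)=\sum_{x\in\irr}n_x^2$ and consider the single operator
\[
P=\sum_{x\in\irr} \dimq\,(\id\ot\id)\big((\,\text{appropriate Haar averaging of }U^{x})\big),
\]
i.e.\ the orthogonal projection $P\in\mathcal{B}(K)$ onto the space of $U$-invariant vectors, which exists by Woronowicz's Peter–Weyl theory (Theorem on decomposition into irreducibles applied to $U$): $P$ is the spectral projection corresponding to the multiplicity of the trivial representation in $U$. The statement "$U$ has a non-zero invariant vector'' is exactly "$P\neq0$''. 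So I must show: if there is a unit vector $\xi$ with $\|U^{x}\eta\ot\xi-\eta\ot\xi\|<\sqrt2\,\|\eta\|$ for all $x\in\irr$, $\eta\in H_x$, then $P\xi\neq0$, equivalently $\langle P\xi,\xi\rangle>0$.

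For this I would compute $\langle P\xi,\xi\rangle$ in terms of the Haar state. The invariance projection can be written as an average over the (compact) quantum group: using the Haar state $\varphi$ of $\G$ and the relation $(\id\ot\varphi)(u^x)=\delta_{x,1}\cdot(\text{rank-one onto invariants})$, one gets for each irreducible $x\neq1$ a contribution, and the whole projection $P$ onto $U$-invariants equals the strong limit/average of $U$ against $\varphi$, which in the finite-dimensional case is the genuine element $P=(\id\ot\varphi)(\widetilde U)$ where $\widetilde U=(\id\ot\rho)(\mathcal V)\in\mathcal B(K)\ot\mathcal B(K)$?... The essential inequality I am after is the standard Kazhdan-constant bound: for a single irreducible $x$, if $\|U^x\eta\ot\xi-\eta\ot\xi\|<\sqrt2\|\eta\|$ for all $\eta$, then $\mathrm{Re}\,\langle U^x\eta\ot\xi,\eta\ot\xi\rangle>0$, i.e.\ the operator $(\id\ot\omega_\xi)(U^x)\in\mathcal B(H_x)$ has $\mathrm{Re}\,\mathrm{Tr}(\cdot)>0$ against any rank-one projection — hence $(\id\ot\omega_\xi)(U^x)+(\id\ot\omega_\xi)(U^x)^*>0$ is strictly positive. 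Averaging these over the irreducibles with the correct quantum weights reconstructs $2\,\mathrm{Re}\langle P\xi,\xi\rangle$ as a sum/integral of strictly positive quantities (the $x=1$ term alone already contributes $1$), so $\langle P\xi,\xi\rangle>0$ and $P\xi\neq0$.

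The main obstacle is making precise the identity "$\langle P\xi,\xi\rangle=$ Haar-average of the $\langle U^x\eta\ot\xi,\eta\ot\xi\rangle$'' — that is, identifying the projection onto $U$-invariant vectors with an explicit average of the matrix coefficients of $U$ against the Haar state, and checking the positivity is \emph{strict} rather than merely non-negative. Here one uses: (i) finite-dimensionality, so $C(\G)$ is a finite-dimensional $C^*$-algebra, $\varphi$ is a faithful trace (indeed $\Gh$ finite-dimensional forces $Q_x=1$, cf.\ the discussion of Kac type), and all averages are honest sums; (ii) the Peter–Weyl orthogonality relations $(\id\ot\varphi)(u^x)=$ projection onto $\mathrm{Mor}(1,x)=\{0\}$ for $x\neq1$, which kills the off-diagonal terms; and (iii) the elementary Hilbert-space fact that $\|v\eta-\eta\|<\sqrt2\|\eta\|$ for a unitary $v$ and all $\eta$ implies $v+v^*>0$. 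Once (i)–(iii) are in place the computation is routine: the trivial summand contributes a full unit and every other summand contributes something $\ge0$, forcing $P\neq0$, which by Proposition \ref{pair} gives property $T$ with Kazhdan pair $(\irr,\sqrt2)$.
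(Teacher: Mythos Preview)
Your approach is essentially the same as the paper's: define the projection onto $U$-invariant vectors via Haar averaging and show that $\langle P\xi,\xi\rangle>0$ using the elementary inequality $\|U^{x}\eta\ot\xi-\eta\ot\xi\|<\sqrt{2}\|\eta\|\Rightarrow\mathrm{Re}\langle U^{x}\eta\ot\xi,\eta\ot\xi\rangle>0$. The only point to clean up is the averaging formula you hesitate over: the paper uses the (finite) left Haar weight $\hat{\varphi}$ on $\Gh$, writing $\tilde{\xi}=(\hat{\varphi}\ot\id)(U)\xi$ so that $\mathrm{Re}\langle\tilde{\xi},\xi\rangle=\sum_{x,i}\tfrac{\dimq^{2}}{n_{x}}\,\mathrm{Re}\langle U^{x}_{ii}\xi,\xi\rangle>0$, which is exactly your computation once you fix the convention.
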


\begin{proof}
If $\Gh$ is finite-dimensional then it is compact, $\varphi$ is a trace and $\hat{\varphi}$ is a normal functional. For $x\in\irr$ let $(e^{x}_{i})$ be an orthonormal basis of $H_{x}$ and $e^{x}_{ij}$ the associated matrix units. As $Q_{x}=1$, we have $\hat{\varphi}(e^{x}_{ij})=\frac{\text{dim}_{q}(x)^{2}}{n_{x}}\delta_{ij}$. Let $U\in\linfG\ot\bk$ be a unitary representation of $\Gh$ with a unit vector $\xi\in K$ such that:
$$\text{Sup}_{x\in\irr, 1\leq j\leq n_{x}}||U^{x}e^{x}_{j}\ot\xi-e^{x}_{j}\ot\xi||<\sqrt{2}.$$
Because $\hat{\varphi}(1)^{-1}(\hat{\varphi}\ot\id)(U)$ is the projection on the $U$-invariant vectors, $\tilde{\xi}=(\hat{\varphi}\ot\id)(U)\xi\in K$ is invariant. Let us show that $\tilde{\xi}$ is non-zero. Writing $U^{x}=\sum e^{x}_{ij}\ot U^{x}_{ij}$ with $U^{x}_{ij}\in\bk$, we have:
$$||U^{x}e^{x}_{j}\ot\xi-e^{x}_{j}\ot\xi||^{2}=2-2\text{Re}\langle U^{x}_{jj}\xi,\xi\rangle,\quad\text{for all}\,\,x\in\irr,\,\,1\leq j\leq n_{x}.$$
It follows that $\text{Re}\langle U^{x}_{jj}\xi,\xi\rangle>0$ for all $x\in\irr$ and all $1\leq j\leq n_{x}$. Thus,
$$\text{Re}\langle\tilde{\xi},\xi\rangle=\sum_{x,i,j}\text{Re}(\hat{\varphi}(e^{x}_{ij})\langle U^{x}_{ij}\xi,\xi\rangle)=\sum_{x,i}\frac{\text{dim}_{q}(x)^{2}}{n_{x}}\text{Re}(\langle U^{x}_{ii}\xi,\xi\rangle)>0.$$
\end{proof}

\begin{remark}
It is easy to see that a discrete quantum group is amenable and has property $T$ if and only if it is finite-dimensional. Indeed, the existence of almost invariant vectors for the regular representation is equivalent with amenability and it is well known that a discrete quantum group is finite dimensional if and only if the regular representation has a non-zero invariant vector. Moreover the previous proposition implies that all finite-dimensional discrete quantum groups have property $T$.
\end{remark}

The main result of this paper is the following.

\begin{theorem}
Let $\widehat{\G}$ be discrete quantum group such that $L^{\infty}(\G)$ is an infinite dimensional factor. The following assertions are equivalent :
\begin{enumerate}
\item $\widehat{\G}$ has property $T$.
\item $L^{\infty}(\G)$ is a $\rm{II}_{1}$ factor with property $T$.
\end{enumerate}
\end{theorem}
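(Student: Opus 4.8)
The plan is to prove the equivalence by translating unitary representations of $\widehat{\G}$ into correspondences over $M = L^\infty(\G)$ and vice versa, so that the "almost invariant vectors / invariant vectors" dichotomy on one side matches the one on the other. First I would set up the dictionary: given a unitary representation $U \in l^\infty(\widehat{\G}) \otimes \mathcal{B}(K)$, one builds a Hilbert $M$-$M$-bimodule $\mathcal{H}_U = L^2(\G) \otimes K$, where the left action of $M$ is the standard one on $L^2(\G)$ and the right action is twisted by $U$ through the multiplicative unitary (this is the quantum-group analogue of the classical construction attaching $\ell^2(\Gamma) \otimes K$ to a unitary representation of $\Gamma$). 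Since $\widehat{\G}$ has property $T$ it is already unimodular and finitely generated by Propositions \ref{trace} and \ref{Propfinitelygenerated}, so $\varphi$ is a trace and $M$ is a $\mathrm{II}_1$ factor (using the hypothesis that $L^\infty(\G)$ is an infinite dimensional factor). The key point of the dictionary is that $U$-invariant vectors in $K$ correspond to $M$-central vectors in $\mathcal{H}_U$, and an $(E,\epsilon)$-almost invariant vector for $U$ produces a vector in $\mathcal{H}_U$ that almost commutes with the finitely many generators of $M$ coming from the coefficients of representations in $E$ — and conversely. One must also check that $\mathcal{H}_{\text{trivial}} \cong L^2(M)$ as a correspondence, and that "contains $L^2(M)$ as a direct summand" on the von Neumann side corresponds to "has a non-zero invariant vector" on the quantum-group side; this uses that $L^2(M)$ is the unique irreducible correspondence containing a central vector and that a non-zero central vector in $\mathcal{H}_U$ generates a copy of $L^2(M)$.

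For the implication $(1) \Rightarrow (2)$: assume $\widehat{\G}$ has property $T$ with Kazhdan pair $(E,\epsilon)$ coming from Proposition \ref{pair}. Choose a finite set of coefficients $b_1,\dots,b_m \in M$ of the representations indexed by $E$ that, together, let one recover the full $U^x$ for $x \in E$ from estimates on $\|b_i\eta - \eta b_i\|$ in $\mathcal{H}_U$. Given any $H \in \mathcal{C}(M)$ with a unit vector $\xi$ that almost commutes with the $b_i$, realize $H$ as (a summand of, or directly as) $\mathcal{H}_U$ for some unitary representation $U$ of $\widehat{\G}$ via the dictionary; translate the almost-commutation into $(E,\epsilon)$-almost invariance of a corresponding vector; apply property $T$ to get a non-zero $U$-invariant vector, hence a non-zero $M$-central vector in $H$. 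This is exactly the Connes–Jones criterion for a $\mathrm{II}_1$ factor, so $M$ has property $T$. For $(2) \Rightarrow (1)$: assume $M$ is a $\mathrm{II}_1$ factor with property $T$, and invoke Proposition \ref{propCJ} to get $\epsilon > 0$, elements $b_1,\dots,b_m \in M$ and $C > 0$ with the quantitative almost-central-vector conclusion. Given a unitary representation $U$ of $\widehat{\G}$ with almost invariant vectors, form $\mathcal{H}_U$; the almost invariant vectors give unit vectors in $\mathcal{H}_U$ almost commuting with $b_1,\dots,b_m$ to any prescribed tolerance, so Proposition \ref{propCJ} yields a non-zero central vector in $\mathcal{H}_U$, i.e. a non-zero $U$-invariant vector. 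Hence $\widehat{\G}$ has property $T$.

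The main obstacle, and the step I would spend the most care on, is making the dictionary between unitary representations of $\widehat{\G}$ and correspondences over $M$ precise and functorial enough that the two quantitative statements transfer cleanly in both directions — in particular, verifying that every correspondence relevant to the Connes–Jones criterion actually arises (up to the features that matter: central vectors and almost-central vectors) from a unitary representation of $\widehat{\G}$, not merely that representations give correspondences. This likely requires the unimodularity of $\widehat{\G}$ in an essential way (so that $L^2(\G)$ carries a trace and left/right structures are symmetric), which is why the theorem is stated only after Proposition \ref{trace}; one should also be careful that "$L^\infty(\G)$ infinite dimensional factor" plus property $T$ forces the $\mathrm{II}_1$ case rather than type $\mathrm{II}_\infty$ or $\mathrm{III}$, which again follows from the trace property established in Proposition \ref{trace}. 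The remaining estimates — bounding $\|U^x \eta \otimes \xi - \eta \otimes \xi\|$ in terms of commutators $\|b_i \xi - \xi b_i\|$ and conversely — are of the same elementary flavor as the computation already carried out in the proof of Proposition \ref{pair}, and I would not expect them to present real difficulty.
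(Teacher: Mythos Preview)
Your overall strategy for $(2)\Rightarrow(1)$ matches the paper's: form $\mathcal{H}_U = L^2(\G)\otimes K$ as a correspondence (in the paper the \emph{left} action is twisted by $U$ via $\pi_l(a)=U(a\otimes 1)U^*$ and the right action is the standard one, but this is cosmetic), show that an $(E,\delta)$-invariant vector $\xi$ gives $\widehat\xi=\Omega\otimes\xi$ almost commuting with the $b_i$, and invoke Proposition~\ref{propCJ}. One point you skate over: central vectors in $\mathcal{H}_U$ are \emph{not} the same as $U$-invariant vectors in $K$; only central vectors of the form $\Omega\otimes\eta$ come from $U$-invariant $\eta$. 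The paper uses the quantitative conclusion of Proposition~\ref{propCJ} to guarantee $\|\widehat\eta-\widehat\xi\|$ is small, so that the projection of $\widehat\eta$ onto $\C\Omega\otimes K$ is non-zero, and then explicitly computes (via the decomposition $L^2(\G)=\bigoplus H_x\otimes H_{\bar x}$) that centrality forces this projected piece to be $U$-invariant. Your claim of a clean bijection ``$U$-invariant $\leftrightarrow$ central'' is too optimistic and needs this extra argument.

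The real gap is in $(1)\Rightarrow(2)$. You propose to take an arbitrary correspondence $H\in\mathcal{C}(M)$ with an almost-central vector and ``realize $H$ as (a summand of, or directly as) $\mathcal{H}_U$'' for some representation $U$. You correctly flag this as the main obstacle, but it is in fact a dead end: the map $U\mapsto\mathcal{H}_U$ is not surjective onto correspondences, and there is no reason the given $H$ should be of this special form. The paper bypasses this entirely. Instead of inverting the dictionary, it builds a unitary representation of $\widehat\G$ \emph{directly on the Hilbert space $H$ of the correspondence} by the formula
\[
U=(\id\otimes\pi_r)(\mathbb{V}^*)(\id\otimes\pi_l)(\mathbb{V})\in l^\infty(\widehat\G)\otimes\mathcal{B}(H),
\]
using the two commuting actions $\pi_l,\pi_r$ and the fact that $\pi_r$ is an anti-homomorphism to check the representation identity. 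Then an almost-central unit vector $\xi'\in H$ is automatically $(E,\epsilon)$-invariant for this $U$ (an elementary estimate with the matrix coefficients $u^x_{ij}$), property~$T$ of $\widehat\G$ produces a $U$-invariant vector, and one checks it is central. No surjectivity of $U\mapsto\mathcal{H}_U$ is ever needed. This is the missing idea in your plan.
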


\begin{proof}
We can suppose that $\G$ is reduced, $C(\G)\subset\mathcal{B}(L^{2}(\G))$ and $\mathbb{V}\in\linfG\ot\L^{\infty}(\G)$. We denote by $M$ the von Neumann algebra $L^{\infty}(\G)$. For each $x\in\irr$ we choose an orthonormal basis $(e^{x}_{i})_{1\leq i \leq n_{x}}$ of $H_{x}$. When $\varphi$ is a trace we take $e^{\bar{x}}_{i}=J_{x}(e^{x}_{i})$. We put $u^{x}_{ij}=(\omega_{e^{x}_{j},e^{x}_{i}}\otimes\id)(u^{x})$.

$1\Rightarrow 2$ : Suppose that $\widehat{\G}$ has property $T$. By Proposition \ref{trace}, M is finite factor. Thus, it is a $\rm{II}_{1}$ factor. Let $(E,\epsilon)$ be a Kazhdan pair for $\widehat{\G}$. Let $K\in\mathcal{C}(M)$ with morphisms $\pi_{l}\,:\,M\rightarrow\bk$ and $\pi_{r}\,:\,M^{op}\rightarrow\bk$. Let $\delta=\frac{\epsilon}{\text{Max}\{n_{x}\sqrt{n_{x}},\,x\in E\}}$. Suppose that there exists a unit vector $\xi^{'}\in K$ such that:
$$||u^{x}_{ij}\xi^{'}-\xi^{'}u^{x}_{ij}||<\delta,\quad\forall x\in E,\,\,\forall\,1\leq i,j\leq  n_{x}.$$
Define $U=(\id\otimes\pi_{r})(\V^{*})(\id\otimes\pi_{l})(\V)$. Because $\mathbb{V}$ is a unitary representation of  $\Gh$ and $\pi_{r}$ is an anti-homomorphism, it is easy to check that $U$ is a unitary representation of $\Gh$ on $K$. Moreover, for all $x\in E$, we have:

\begin{eqnarray*}
||U^{x}e^{x}_{i}\ot\xi^{'}-e^{x}_{i}\ot\xi^{'}||
&=&||(\id\otimes\pi_{l})(u^{x})e^{x}_{i}\ot\xi^{'}-(\id\otimes\pi_{r})(u^{x})e^{x}_{i}\ot\xi^{'}||\\
&=&||\sum_{k=1}^{n_{x}}e^{x}_{k}\ot(u^{x}_{ki}\xi^{'}-\xi^{'}u^{x}_{ki})||\\
&\leq &\sum_{k=1}^{n_{x}}||e^{x}_{k}\ot(u^{x}_{ki}\xi^{'}-\xi^{'}u^{x}_{ki})||\\
&< &n_{x}\delta\leq\frac{\epsilon}{\sqrt{n_{x}}}.
\end{eqnarray*}

It follows easily that for all $x\in E$ and all $\eta\in H_{x}$ we have $||U^{x}\eta\ot\xi^{'}-\eta\ot\xi^{'}||<\epsilon||\eta||$. Thus there exists a non-zero $U$-invariant vector $\xi\in K$. It is easy to check that $\xi$ is a central vector.

$2\Rightarrow 1$ : Suppose that $M$ is a $\rm{II}_{1}$ factor with property $T$ and let $\epsilon>0$ and $b_{1},\ldots, b_{n}\in M$ be as in Proposition \ref{propCJ}. Let $\varphi$ be the Haar state on $\G$. By \cite{Fima}, Theorem 8, $\varphi$ is the unique tracial state on $M$. We can suppose that $||b_{i}||_{2}=1$. Using the classical G.N.S. construction $(L^{2}(\G),\Omega)$ for $\varphi$ we have, for all $a\in M$,
$$a\Omega=\sum_{x,k,l}n_{x}\varphi((u^{x}_{kl})^{*}a)u^{x}_{kl}\Omega.$$
In particular, $||b_{i}||_{2}^{2}=\sum n_{x}|\varphi((u^{x}_{kl})^{*}b_{i})|^{2}=1$. Fix $\delta>0$ then there exists a finite subset $E\subset\irr$ such that, for all $1\leq i\leq n$,
$$\sum_{x\notin E, k,l} n_{x}|\varphi((u^{x}_{kl})^{*}b_{i})|^{2}<\delta^{2}.$$
Let $U$ be a unitary representation of $\Gh$ on $K$ having almost invariant vectors and $\xi\in K$ an $(E,\delta)$-invariant unit vector. Turn $L^{2}(\G)\ot K$ into a correspondence from $M$ to $M$ using the morphisms $\pi_{l}\,:\,M\rightarrow\mathcal{B}(L^{2}(\G)\ot K)$, $\pi_{l}(a)=U(a\ot 1) U^{*}$ and $\pi_{r}\,:\,M^{op}\rightarrow\mathcal{B}(L^{2}(\G)\ot K)$, $\pi_{r}(a)=Ja^{*}J\ot 1$, where $J$ is the modular conjugation of $\varphi$. Let $\widehat{\xi}=\Omega\ot\xi$. It is easy to see that $\pi_{l}(u^{x}_{kl})=\sum_{s}u^{x}_{ks}\ot U^{x}_{sl}$ and, for all $a\in M$,
$$a\widehat{\xi}=\sum n_{x}\varphi((u^{x}_{kl})^{*}a)u^{x}_{ks}\Omega\ot U^{x}_{sl}\xi.$$
Note that, because $\varphi$ is a trace, $\Omega$ is a central vector in $L^{2}(\G)$ and we have, for all $a\in M$, $\widehat{\xi}a=a\Omega\ot\xi$. It follows that, for all $1\leq i\leq n$, we have

\begin{eqnarray*}
||b_{i}\widehat{\xi}-\widehat{\xi}b_{i}||^{2}
&=&|| \sum_{x,k,l,s}n_{x}\varphi((u^{x}_{kl})^{*}b_{i})u^{x}_{ks}\Omega\ot U^{x}_{sl}\xi-\sum_{x,k,l}n_{x}\varphi((u^{x}_{kl})^{*}b_{i})u^{x}_{kl}\Omega\ot\xi||^{2}\\
&=&|| \sum_{x,k,l}n_{x}\varphi((u^{x}_{kl})^{*}b_{i})\left(\sum_{s}u^{x}_{ks}\Omega\ot U^{x}_{sl}\xi-u^{x}_{kl}\Omega\ot\xi\right)||^{2}\\
&=&|| \sum_{x,k,l}\sqrt{n_{x}}\varphi((u^{x}_{kl})^{*}b_{i})\left(\sum_{s} e^{x}_{s}\ot J_{x}(e^{x}_{k})\ot U^{x}_{sl}\xi-e^{x}_{l}\ot J_{x}(e^{x}_{k})\ot\xi\right)||^{2}\\
&=&|| \sum_{x,k,l}\sqrt{n_{x}}\varphi((u^{x}_{kl})^{*}b_{i})J_{x}(e^{x}_{k})\ot\left(\sum_{s} e^{x}_{s}\ot U^{x}_{sl}\xi-e^{x}_{l}\ot\xi\right)||^{2}\\
&=&|| \sum_{x,k,l}\sqrt{n_{x}}\varphi((u^{x}_{kl})^{*}b_{i})J_{x}(e^{x}_{k})\ot( U^{x}e^{x}_{l}\ot\xi-e^{x}_{l}\ot\xi)||^{2}\\
&=&  \sum_{x,k}n_{x}||\sum_{l}\varphi((u^{x}_{kl})^{*}b_{i})( U^{x}e^{x}_{l}\ot\xi-e^{x}_{l}\ot\xi)||^{2}\\
&=& \sum_{x,k}n_{x}||U^{x}\eta^{x}_{k}\ot\xi-\eta^{x}_{k}\ot\xi)||^{2},
\,\,\,\text{where}\,\, \eta^{x}_{k}=\sum_{l}\varphi((u^{x}_{kl})^{*}b_{i})e^{x}_{l}\\
&=& \sum_{x\in E,k}n_{x}||U^{x}\eta^{x}_{k}\ot\xi-\eta^{x}_{k}\ot\xi)||^{2}
+\sum_{x\notin E,k}n_{x}||U^{x}\eta^{x}_{k}\ot\xi-\eta^{x}_{k}\ot\xi)||^{2}\\
&<&  \delta^{2}\sum_{x\in E,k}n_{x}||\eta^{x}_{k}||^{2}
+4\sum_{x\notin E,k}n_{x}||\eta^{x}_{k}||^{2}\\
&<& \delta^{2}\sum_{x\in E,k,l}n_{x}|\varphi((u^{x}_{kl})^{*}b_{i})|^{2}
+4\sum_{x\notin E,k,l}n_{x}|\varphi((u^{x}_{kl})^{*}b_{i})|^{2}\\
&<&  \delta^{2} + 4 \delta^{2}= 5\delta^{2}.\\
\end{eqnarray*}
By Proposition \ref{propCJ}, for $\delta$ small enough, there exists a central unit  vector $\widehat{\eta}\in L^{2}(\G)\ot K$ with $||\widehat{\eta}-\widehat{\xi}||<\sqrt{5}C\delta$. Let $P$ be the orthogonal projection on $\C\Omega$. If $\delta$ is small enough then there is a non-zero $\eta\in K$ such that $(P\ot 1)\widehat{\eta}=\Omega\ot\eta$. Write $\widehat{\eta}=\sum_{y,s,t}e^{y}_{t}\otimes e^{\bar{y}}_{s}\otimes\eta^{y}_{s,t}$ where $\eta^{y}_{s,t}\in K$ and $\eta^{1}=\eta$. We have, for all $x\in\irr$ and all $1\leq i,j\leq n_{x}$, $\pi_{l}(u^{x}_{ij})\widehat{\eta}=\pi_{r}(u^{x}_{ij})\widehat{\eta}$. This means:
\begin{eqnarray}\label{Eqinv}
\sum_{k,y,t,s}u^{x}_{ik}(e^{y}_{t}\otimes e^{\bar{y}}_{s})\otimes U^{x}_{kj}\eta^{y}_{s,t}
=\sum_{y,t,s}J(u^{x}_{ij})^{*}J(e^{y}_{t}\otimes e^{\bar{y}}_{s})\otimes\eta^{y}_{s,t}.
\end{eqnarray}
Let $Q$ be the orthogonal projection on $H_{x}\otimes H_{\bar{x}}$. Using
$$u^{x}_{ik}(e^{y}_{t}\otimes e^{\bar{y}}_{s})\subset\bigoplus_{z\subset x\ot y}H_{z}\ot H_{\bar{z}},$$
and $x\subset x\ot y$ if and only if $y=1$, we find:
$$Qu^{x}_{ik}(e^{y}_{t}\otimes e^{\bar{y}}_{s})=\delta_{y,1}\frac{1}{\sqrt{n_{x}}}e^{x}_{k}\otimes e^{\bar{x}}_{i}.$$
Using the same arguments and the fact that $J=\bigoplus(J_{x}\otimes J_{\bar{x}})$ we find:
$$QJ(u^{x}_{ij})^{*}J(e^{y}_{t}\otimes e^{\bar{y}}_{s})=\delta_{y,1}\frac{1}{\sqrt{n_{x}}}e^{x}_{j}\otimes e^{\bar{x}}_{i}.$$
Applying $Q\ot 1$ to Eq. $(\ref{Eqinv})$ we obtain:
$$\sum_{k}e^{x}_{k}\otimes e^{\bar{x}}_{i}\otimes U^{x}_{kj}\eta=e^{x}_{j}\otimes e^{\bar{x}}_{i}\otimes\eta,\quad\text{for all}\,\,x\in\irr,\,\,1\leq i,j\leq n_{x}.$$
Thus, for all $x\in\irr$ and all $1\leq j\leq n_{x}$, we have:
$$U^{x}(e^{x}_{j}\otimes\eta)=\sum_{k}e^{x}_{k}\otimes U^{x}_{kj}\eta=e^{x}_{j}\otimes\eta.$$
Thus $\eta$ is a non-zero $U$-invariant vector.
\end{proof}

The preceding theorem admits the following corollary about the persistance of property $T$ by twisting.

\begin{corollary}
Let $\G$ be a compact quantum group such that $L^{\infty}(\G)$ is an infinite dimensional factor. Suppose that $K$ is an abelian co-subgroup of $\G$ (see \cite{FimaVain}). Let $\sigma$ be a continuous bicharacter on $\widehat{K}$ and denote by $\G^{\sigma}$ the twisted quantum group. If $\Gh$ has property $T$ then $\widehat{\G^{\sigma}}$ is a discrete quantum group with property $T$.
\end{corollary}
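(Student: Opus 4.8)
The plan is to obtain the Corollary as an immediate consequence of the preceding Theorem, applied in both directions, together with the basic feature of the twisting construction of \cite{FimaVain}: twisting a compact quantum group by a bicharacter on an abelian co-subgroup leaves the underlying operator algebras unchanged and only modifies the comultiplication.

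Concretely, I would first recall from \cite{FimaVain} that the continuous bicharacter $\sigma$ on $\widehat{K}$ — and here the hypothesis that $K$ be abelian is essential — yields a unitary $2$-cocycle $\Omega$ for $\widehat{\G}$, that is, a unitary $\Omega\in L^{\infty}(\G)\ot L^{\infty}(\G)$ satisfying the cocycle identity, and that $\G^{\sigma}$ is the compact quantum group with the same underlying $C^{*}$-algebra and von Neumann algebra as $\G$ but with comultiplication $\Delta^{\sigma}=\Omega\Delta(\cdot)\Omega^{*}$. In particular $L^{\infty}(\G^{\sigma})=L^{\infty}(\G)$; since by hypothesis this is an infinite dimensional factor, the standing hypothesis of the Theorem is satisfied both for $\widehat{\G}$ and for $\widehat{\G^{\sigma}}$.

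Then I would simply run the Theorem twice. Because $\widehat{\G}$ has property $T$ and $L^{\infty}(\G)$ is an infinite dimensional factor, the implication $(1)\Rightarrow(2)$ gives that $L^{\infty}(\G)$ is a $\rm{II}_{1}$ factor with property $T$. Since $L^{\infty}(\G^{\sigma})$ equals $L^{\infty}(\G)$ as a von Neumann algebra (and, in any event, property $T$ for von Neumann algebras is an isomorphism invariant), $L^{\infty}(\G^{\sigma})$ is also a $\rm{II}_{1}$ factor with property $T$; applying the implication $(2)\Rightarrow(1)$ to $\G^{\sigma}$ then shows that $\widehat{\G^{\sigma}}$ has property $T$. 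The only point demanding care — and the crux of the argument — is the input from \cite{FimaVain} that the twist does not disturb the von Neumann algebra; once that is granted, the conclusion is formal.
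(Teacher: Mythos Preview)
Your overall architecture is exactly the paper's: apply the Theorem in the direction $(1)\Rightarrow(2)$ for $\G$, transfer the conclusion to $\G^{\sigma}$ via $L^{\infty}(\G^{\sigma})=L^{\infty}(\G)$, and then apply $(2)\Rightarrow(1)$ for $\G^{\sigma}$. The difference is in how you justify the equality $L^{\infty}(\G^{\sigma})=L^{\infty}(\G)$, and here your argument has a gap.

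You assert, as a general feature of the construction in \cite{FimaVain}, that twisting by a bicharacter on an abelian co-subgroup leaves the von Neumann algebra unchanged. That is not what \cite{FimaVain} says: the very title of that paper announces a \emph{deformation of the Haar measure}. In general the Haar state $\varphi_{\sigma}$ of $\G^{\sigma}$ differs from $\varphi$, and since $L^{\infty}(\G^{\sigma})$ is by definition the von Neumann algebra generated by the GNS representation of $\varphi_{\sigma}$, one cannot conclude $L^{\infty}(\G^{\sigma})=L^{\infty}(\G)$ without further input. What is needed is the notion of \emph{stability} of the co-subgroup $K$ (in the sense of \cite{FimaVain}); when $K$ is stable the twisted Haar state coincides with the original one, $\varphi_{\sigma}=\varphi$, and then indeed $L^{\infty}(\G^{\sigma})=L^{\infty}(\G)$.

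The missing step, and the place where the hypothesis ``$\Gh$ has property $T$'' enters a second time, is Proposition~\ref{trace}: property $T$ forces $\varphi$ to be a trace, and traciality of $\varphi$ implies that any abelian co-subgroup is stable. Only then is $\varphi_{\sigma}=\varphi$, and your transfer of property $T$ through the von Neumann algebra goes through. So you should insert the chain ``property $T$ $\Rightarrow$ $\varphi$ tracial $\Rightarrow$ $K$ stable $\Rightarrow$ $\varphi_{\sigma}=\varphi$ $\Rightarrow$ $L^{\infty}(\G^{\sigma})=L^{\infty}(\G)$'' at the point you currently invoke \cite{FimaVain} as a black box.
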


\begin{proof}
If $\Gh$ has property $T$ then the Haar state $\varphi$ on $\G$ is a trace. Thus the co-subgroup $K$ is stable (in the sense of \cite{FimaVain}) and the Haar state $\varphi_{\sigma}$ on $\G^{\sigma}$ is the same, \ie{} $\varphi=\varphi_{\sigma}$. It follows that $\G^{\sigma}$ is a compact quantum group with $L^{\infty}(\G^{\sigma})=L^{\infty}(\G)$. Thus $L^{\infty}(\G^{\sigma})$ is a $\rm{II}_{1}$ factor with property $T$ and $\widehat{\G_{\sigma}}$ has property $T$.
\end{proof}

\begin{example}
The group $SL_{2n+1}(\Z)$ is I.C.C. and has property $T$ for all $n\geq 1$. Let $K_{n}$ be the subgroup of diagonal matrices in $SL_{2n+1}(\Z)$. We have $K_{n}=\Z_{2}^{2n}=\langle t_{1},\ldots,t_{2n}\,|\,t_{i}^{2}=1\,\,\forall i,\,\,t_{i}t_{j}=t_{j}t_{i}\,\,\forall i,j\rangle$ and $K_{n}$ is an abelian co-subgroup of $\G_{2n+1}=(C^{*}(SL_{2n+1}(\Z)),\Delta)$. Consider the following bicharacter on $\widehat{K_{n}}=K_{n}$: $\sigma$ is the unique bicharacter such that $\sigma(t_{i},t_{j})=-1$ if $i\leq j$ and $\sigma(t_{i},t_{j})=1$ if $i>j$. By the preceding Corollary, the twisted quantum group $\widehat{\G_{2n+1}^{\sigma}}$ has property $T$ for all $n\geq 1$. When $n$ is even, $SL_{n}(\Z)$ is not I.C.C. and  $I$ and $-I$ lie in the centre of $SL_{n}(\Z)$. We consider the group $PSL_{n}(\Z)=SL_{n}(\Z)/\{I,-I\}$ in place of $SL_{n}(\Z)$ in the even case. It is well known that $PSL_{2n}(\Z)$ is I.C.C. and has property $T$ for $n\geq 2$. The group of diagonal matrices in $SL_{2n}(\Z)$ is $\Z_{2}^{2n-1}$ which contains $\{I,-I\}$. We consider the following abelian subgroup of $PSL_{2n}(\Z)$: $L_{n}=\Z_{2}^{2n-1}/\{I,-I\}=\Z_{2}^{2n-2}=K_{n-1}$ and the same bicharacter $\sigma$ on $K_{n-1}$. Let $\G_{2n}=(C^{*}(PSL_{2n}(\Z)),\Delta)$. By the preceding Corollary, the twisted quantum group $\widehat{\G_{2n}^{\sigma}}$ has property $T$ for all $n\geq 2$.
\end{example}

\end{document}